\DeclarePairedDelimiter{\abs}{\lvert}{\rvert}
\DeclarePairedDelimiter{\norm}{\lVert}{\rVert}
\newcommand{\numberset}{\mathbb}
\newcommand{\N}{\numberset{N}}
\newcommand{\R}{\numberset{R}}
\newcommand{\Z}{\numberset{Z}}
\newcommand{\C}{\numberset{C}}
\newcommand{\sphere}{\mathbb{S}}
\newcommand{\Ham}{\text{Ham}}
\newcommand{\Symp}{\text{Symp}}
\newcommand{\Sym}{\text{Sym}}
\newcommand{\scc}{\mathscr{C}}
\newcommand{\D}{\mathbb{D}}
\newcommand{\Cal}{\text{Cal}}
\newcommand{\Flux}{\text{Flux}}
\newcommand{\supp}{\text{supp}}
\theoremstyle{definition}
\newtheorem{thm}{Theorem}[section]
\newtheorem{lem}[thm]{Lemma}
\newtheorem{cor}[thm]{Corollary}
\newtheorem{defn}{Definition}[section]
\newtheorem{qst}[thm]{Question}
\theoremstyle{remark}
\newtheorem*{rem}{Remark}
\newcommand*{\@old@slash}{}\let\@old@slash\slash
\def\slash{\relax\ifmmode\delimiter"502F30E\mathopen{}\else\@old@slash\fi}
\def\bign#1{\mathclose{\hbox{$\left#1\vbox to8.5\p@{}\right.\n@space$}}\mathopen{}}
\def\Bign#1{\mathclose{\hbox{$\left#1\vbox to11.5\p@{}\right.\n@space$}}\mathopen{}}
\def\biggn#1{\mathclose{\hbox{$\left#1\vbox to14.5\p@{}\right.\n@space$}}\mathopen{}}
\def\Biggn#1{\mathclose{\hbox{$\left#1\vbox to17.5\p@{}\right.\n@space$}}\mathopen{}}
\title{Link Floer Homology and a Hofer 
Pseudometric on Braids}
\author{Francesco Morabito (École Polytechnique, CMLS) \\ email: \href{mailto:francesco.morabito@polytechnique.edu}{francesco.morabito@polytechnique.edu}  }
\begin{document}

\maketitle
\begin{abstract}
    In this paper, we provide new Hofer energy estimates for a class of Hamiltonian diffeomorphisms of the disc. This is formalised following an idea of Frédéric le Roux: we define a family of Hofer-type pseudonorms on braid groups, computing the minimal energy of a Hamiltonian diffeomorphism which fixes a Lagrangian configuration of circles on the unit disc and realises that braid type. We prove that in the case of braids with two strands we have in fact a norm, and we give lower estimates for braids with more strands. The main tool is Link Floer Homology, recently defined by D. Cristofaro-Gardiner, V. Humilière, C.-Y. Mak, S. Seyfaddini and I. Smith, which we use to construct a family of quasimorphisms on $\Ham_c(\D, \omega)$ which is sensitive to the linking number of diffeomorphisms fixing Lagrangian links.
\end{abstract}

\tableofcontents

\section{Introduction}

Let $\D$ be the open unit disc in the complex plane, with its canonical symplectic form $\omega$ normalised so that $\mathrm{Area}(\D)=\int_\D\omega=1$. Given a (possibly time dependent) real-valued function $H\in\scc^\infty(\sphere^1\times \D; \R)$ one can define its Hamiltonian vector field $X_H$ via the equality $\iota_{X_{H_t}}\omega=-dH_t$. Denote by $\Ham(\D, \omega)$ the group of time 1 flows of Hamiltonian vector fields. Since $\D$ is open, we restrict to compactly supported Hamiltonians and their time 1 maps, whose group we note $\Ham_c(\D, \omega)$. With this restriction we ensure that the flow of the Hamiltonian vector field generated by $H$ is defined at every point of the disc and for all times. We denote by $(\phi^t_H)$ this flow, and set $\phi_H:=\phi^1_H$. Recall that $\Ham_c(\D, \omega)$ is a normal subgroup in $\Symp(\D, \omega)$, the group of area-preserving smooth diffeomorphisms of the disc.

One can define a remarkable metric on $\Ham_c(\D, \omega)$, the Hofer metric, as follows. Given a compactly supported Hamiltonian $H$, we define its oscillation to be\[
\norm{H}=\int_0^1\left(\max_{x\in \D}H_t(x)-\min_{x\in \D}H_t(x)\right)\, dt
\]
and we can define the Hofer norm of a compactly supported Hamiltonian diffeomorphism by\[
\norm{\phi}=\inf_{H, \phi=\phi^1_H}\norm{H}.
\]

We define the Hofer distance between two Hamiltonian diffeomorphisms in a bi-invariant way:\[
d_H(\phi, \psi):=\norm{\phi\psi^{-1}}.
\]

Non-degeneracy of the Hofer distance is non trivial to show and was proved in various degrees of generality in \cite{hof90}, \cite{pol93}, \cite{lalMcD95}. A flourishing domain of study is also that of the large scale geometry of the Hofer norm: see for instance \cite{cGHS21}, \cite{polShel23} and references therein.
\begin{rem}It is in general complicated to find Hamiltonian diffeomorphisms with large Hofer norm. Known examples yield the following heuristics: if a diffeomorphism ``moves around big open sets in a complicated manner'', it should have large Hofer norm. The main result of this paper should be seen as a confirmation of this general principle in the case of the disc.
\end{rem}
One of the main tools for the study of the Hofer metric on $\Ham$ are quasimorphisms, introduced in the field of Symplectic Topology in \cite{entPol03}. A \textbf{quasimorphism} on a group $G$ (in our case $G=\Ham_c(\D, \omega)$) is a function $Q: G\rightarrow\R$ such that there is a constant $D\geq 0$, the defect of the quasimorphism, verifying for all $g, h\in G$\[
\abs{Q(gh)-Q(g)-Q(h)}\leq D.
\]
A quasimorphism is said to be \textbf{homogeneous} if it is a homomorphism when restricted to powers of the same element: $\forall g\in G, \forall k\in \Z$, $Q(g^k)=kQ(g)$. Homogeneous quasimorphisms may be used to study properties of Hamiltonian diffeomorphism groups with respect to the Hofer metric when they are Hofer-Lipschitz. The research for and the study of Hofer-Lipschitz quasimorphisms has now a long history and a wide plethora of applications, some of which can for instance be found in \cite{Cghmss21} \cite{entPol03}, \cite{EPP08}, \cite{kaw22} \cite{kha09}, \cite{py06}, and several others. The research for quasimorphisms on $\Ham(M, \omega)$ may be justified by the simplicity theorem of Banyaga \cite{ban78} (see also \cite{ban97} and \cite{McDuffSalamon94}): for a closed manifold $\Ham$ is simple, so that it does not admit any non-trivial homomorphism (kernels are normal subgroups). If however $M^{2n}$ is an open manifold with symplectic form $\omega=d\lambda$ (for us $M=\D$ with its standard structure) there's a natural homomorphism on $\Ham_c(M, \omega)$:
\[
\text{Cal}:\Ham_c(M, \omega)\rightarrow\D,\,\, \phi_H\rightarrow \int_0^1\int_MH_t\omega^n dt
\]
$\mathrm{Cal}$ will later appear in this paper as a part of definitions of new quasimorphisms.

\subsection{Braid Groups}
Braid groups are classical objects in group theory. Typical references on the subject include the book \cite{kasTur08} and the paper \cite{gon11}. The name of the group was coined by Emil Artin in \cite{art25}, where he gave an abstract definition in terms of generators and relations: for $k\geq 2$,
\begin{equation*}
\mathcal{B}_k = \left \langle \sigma_1, \dots, \sigma_{k-1} \middle\vert
\begin{aligned}
 \sigma_i\sigma_j & = \sigma_j\sigma_i : |i-j| > 1 \\
 \sigma_i\sigma_{i+1} \sigma_i & = \sigma_{i+1} \sigma_i\sigma_{i+1} : 1 \leqslant i\leqslant k-2
\end{aligned}
\right\rangle.
\end{equation*}
As explained in the references above however, there are more geometrical ways of seeing the braid groups, the one we are going to use in the present paper being the following one. In the cartesian product $\D^k$, let $\Delta$ be the fat diagonal (see Section \ref{section:descrTheory}). The permutation group on $k$ letters $\mathfrak{S}_k$ admits a faithful action on $\D^k\setminus\Delta$, and we call the quotient
\[
\mathrm{Conf}^k(\D):=(\D^k\setminus\Delta)/\mathfrak{S}_k
\]
the configuration space of $k$ unordered points of the disc. It turns out that $\pi_1(\text{Conf
}^k(\D))=\mathcal{B}_k$. This means essentially that after choosing $k$ base points $p_1, \dots, p_k$ on $\D$, one can define an element of $\mathcal{B}_k$ uniquely as the homotopy class of a path $\gamma: [0, 1]\rightarrow\D^k\setminus \Delta$ such that there is $\sigma\in\mathfrak{S}_k$ verifying $\gamma(1)=\sigma \gamma(0)$, and that any braid may be realised this way. The multiplication in $\mathcal{B}_k$ then corresponds to concatenation of braids the obvious way.

There is a quotient homomorphism $\mathcal{B}_k\rightarrow \mathfrak{S}_k$, mapping positive and negative generators to the same transposition, or equivalently identifying $\sigma_i$ with $\sigma_i^{-1}$. Its kernel is called $P_k$, the set of pure braids. One can visualise pure braids as braids such that, following the strands, bring each basepoint on itself.

In the following sections of this work we shall need the definition of linking number\footnote{In the low-dimensional topology literature it may also be called ``exponent''.} \[
\mathrm{lk}: \mathcal{B}_k\rightarrow\Z.
\]
The function $\mathrm{lk}$ is defined as the only morphism of groups whose value on all the positive generators is 1. Note that this convention, very natural from the algebraic viewpoint, requires a different normalisation from the usual one: choosing basepoints $0$ and $\frac{1}{2}$ in $\D$, the braid represented by $t\mapsto (0, \frac{1}{2}\exp\left(2\pi i t\right))$ has linking number 2 according to our definition, while it is customarily used as an example of a loop with a linking number 1 in geometric settings.

\subsection{The Main Results}

In this paper we are going to find a lower bound on the Hofer norm of Hamiltonian diffeomorphisms which preserve certain configurations of circles, given a ``braid type'' they realise.

\begin{defn}
A family of embedded circles $L_i, i=1,\dots, k$ in $\D$ is called \textbf{pre-monotone} if the circles are disjoint, if there are discs $D_i, i=1,\dots, k$ and $A\in\left(\frac{1}{k+1}, \frac{1}{k}\right)$ such that $L_i=\partial D_i$ and $\mathrm{Area}(D_i)=A$.
\end{defn}

Let $\underline{L}$ indicate the choice of an unordered pre-monotone configuration of circles. Define the subgroup $\Ham_{\underline{L}}(\D, \omega)$  of $\Ham_c(\phi, \omega)$ by
\[
\Ham_{\underline{L}}(\D, \omega)=\Set{\phi\in\Ham_c(\D, \omega)\vert \phi(\underline{L})=\underline{L}}.
\]

We may associate a braid type to every diffeomorphism in $\Ham_{\underline{L}}(\D, \omega)$ the following way. Choose base points $p_i\in L_i$, and for any $\phi\in \Ham_{\underline{L}}(\D, \omega)$ consider a Hamiltonian isotopy between $\mathrm{Id}$ and $\phi$, $(\phi^t)$. This gives a path $(\phi^t(p_i))$ in $\mathrm{Conf}^k(\D)$ starting and ending on $\underline{L}$, therefore for all $i$ there exists a $\sigma(i)$ with $\phi(p_i)\in L_{\sigma(i)}$. Joining $\phi(p_i)$ with $p_{\sigma(i)}$ using a path contained in $L_{\sigma(i)}$ one finds a braid which we denote by $b(\phi, \underline{L})$. Its braid type (i.e. equivalence class modulo braid isotopy) does not depend on the path we choose since $L_i\cap L_j=\emptyset$ for $i\neq j$, nor on the Hamiltonian isotopy we use, as $\pi_1(\Ham_c(\D, \omega))=0$. Given a pre-monotone Lagrangian link $\underline{L}$ of $k$ circles and a braid $b\in \mathcal{B}_k$ we may define the subset\[
\Ham_{b, \underline{L}}(\D, \omega)=\Set{\phi\in \Ham_{\underline{L}}(\D, \omega)\vert b(\phi, \underline{L})=b}.
\]

\begin{rem}
The $\Ham_{b, \underline{L}}(\D, \omega)$ are clearly not subgroups of $\Ham_c(\D, \omega)$, due to the morphism property of the map $\phi\mapsto b(\phi, \underline{L})$: if $\phi, \psi\in \Ham_{\underline{L}}(\D, \omega)$,\[
b(\phi\circ\psi, \underline{L})=b(\psi, \underline{L})\#b(\psi,\underline{L}).
\]
\end{rem}

\begin{rem}
We choose to require in this definition that the circles bound discs of the same area. Allowing for discs of different areas might result in a slight generalisation, but since we are considering Hamiltonian maps (which in particular preserve areas), if the configuration is not pre-monotone we are not going to be able to produce any braid in $\mathcal{B}_k$ as a braid type of a Hamiltonian diffeomorphism. If the configuration is pre-monotone however the morphism $\Ham_{\underline{L}}(\D, \omega)\rightarrow \mathcal{B}_k$ is easily seen to be surjective: one can realise all the generators of $\mathcal{B}_k$. See Section \ref{section:B2} where we do it for $k=2$. The case $k\geq 3$ is totally analogous.
\end{rem}

Using this notation, we can define a family of Hofer pseudonorms on braid groups. Given a pre-monotone Lagrangian configuration $\underline{L}$, following an idea of Frédéric Le Roux we define
\[
\norm{\cdot}_{\underline{L}}: \mathcal{B}_k\rightarrow\R, \, \norm{b}_{\underline{L}}=\inf\Set{\norm{\phi}\vert \phi\in \Ham_{b, \underline{L}}(\D, \omega)}.
\]

The following question is whether $\norm{\cdot}_{\underline{L}}$ is a norm. It is easy to see that $\norm{\cdot}_{\underline{L}}$ is at least a pseudonorm:

\begin{lem}
    For all $g_1, g_2\in\mathcal{B}_k$, $\norm{g_1}=\norm{g^{-1}_1}$ and $\norm{g_1g_2}\leq \norm{g_1}+\norm{g_2}$.
\end{lem}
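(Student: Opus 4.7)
Both assertions will reduce quickly to the analogous properties of the Hofer norm on $\Ham_c(\D, \omega)$ once one uses the fact, already observed in the remark, that $b(\cdot, \underline{L}): \Ham_{\underline{L}}(\D, \omega) \to \mathcal{B}_k$ is a group morphism. The plan is to unpack what ``morphism'' gives us on the sets $\Ham_{b, \underline{L}}(\D, \omega)$ and then translate Hofer properties through an infimum.

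For the triangle inequality, I would fix $g_1, g_2 \in \mathcal{B}_k$ and take arbitrary $\phi_1 \in \Ham_{g_1, \underline{L}}(\D, \omega)$ and $\phi_2 \in \Ham_{g_2, \underline{L}}(\D, \omega)$. Since $b$ is a morphism,
\[
b(\phi_1 \circ \phi_2, \underline{L}) = b(\phi_1, \underline{L})\,\#\,b(\phi_2, \underline{L}) = g_1 g_2,
\]
so $\phi_1 \circ \phi_2 \in \Ham_{g_1 g_2, \underline{L}}(\D, \omega)$. The Hofer norm on $\Ham_c(\D, \omega)$ is subadditive, hence
\[
\norm{g_1 g_2}_{\underline{L}} \leq \norm{\phi_1 \circ \phi_2} \leq \norm{\phi_1} + \norm{\phi_2}.
\]
Taking the infimum over $\phi_1$ and then over $\phi_2$ yields $\norm{g_1 g_2}_{\underline{L}} \leq \norm{g_1}_{\underline{L}} + \norm{g_2}_{\underline{L}}$.

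For the inversion symmetry, apply the morphism property to $\phi \circ \phi^{-1} = \mathrm{Id}$: we get $b(\phi^{-1}, \underline{L}) = b(\phi, \underline{L})^{-1}$. Thus $\phi \mapsto \phi^{-1}$ restricts to a bijection $\Ham_{g_1, \underline{L}}(\D, \omega) \to \Ham_{g_1^{-1}, \underline{L}}(\D, \omega)$. Since the Hofer norm satisfies $\norm{\phi^{-1}} = \norm{\phi}$ (an immediate consequence of the definition: if $H$ generates $\phi$, then $-H(t, \phi^t_H(x))$ generates $\phi^{-1}$ and has the same oscillation), taking infima on both sides gives $\norm{g_1^{-1}}_{\underline{L}} = \norm{g_1}_{\underline{L}}$.

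There is no real obstacle to overcome here; the only point that deserves a line of care is justifying that $b$ is indeed a morphism, which the preceding remark already spells out (modulo the apparent typo in that remark, where the first $\psi$ should read $\phi$). Everything else is formal manipulation of infima.
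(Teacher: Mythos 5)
Your proof is correct and follows essentially the same route as the paper: the morphism property of $\phi\mapsto b(\phi,\underline{L})$ combined with the symmetry and subadditivity of the Hofer norm on $\Ham_c(\D,\omega)$, then taking infima. The only addition is your (correct) observation about the typo in the remark, and possibly one should note that if the concatenation order in the morphism property were reversed one would simply compose $\phi_2\circ\phi_1$ instead, which changes nothing.
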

\begin{proof}
    The first point is obvious since for all $\phi\in \Ham_c(\D, \omega)$ $\norm{\phi}=\norm{\phi^{-1}}$ and $b(\phi, \underline{L})=b(\phi^{-1}, \underline{L})^{-1}$. For the second one,\[
    \norm{g_1g_2}_{\underline{L}}=\inf_{\phi\in\Ham_{\underline{L}}(\D, \omega), b(\phi, \underline{L})=g_1g_2}\norm{\phi}\leq \inf_{\phi_i, \in\Ham_{\underline{L}}(\D, \omega), b(\phi_i, \underline{L})=g_i}\norm{\phi_1\phi_2}
    \]
    and applying triangular inequality
    \[
    \norm{g_1g_2}_{\underline{L}}\leq \norm{\phi_1}+\norm{\phi_2},\,\, \forall \, \phi_i\in \Ham_{\underline{L}}(\D, \omega)\text{ with }b(\phi_i, \underline{L})=g_i
    \]
    so that taking the infimum first on $\phi_1$ and then on $\phi_2$ we conclude.
\end{proof}

Our main result is a kind of braid persistence which does not depend on the action of periodic orbits. Define a pseudodistance on $\mathcal{B}_k$ by the formula $d_{\underline{L}}(g, h)=\norm{gh^{-1}}_{\underline{L}}$. Our main result is the following:
\begin{thm}\label{thm:persLink}
    If $g, h\in\mathcal{B}_k$, then\[
    d_{\underline{L}}(g, h)\geq \frac{(k+1)A}{3(k+1)A+1}\frac{1}{k}\frac{(k+1)A-1}{2(k-1)}\abs{\mathrm{lk}(gh^{-1})}.
    \]
\end{thm}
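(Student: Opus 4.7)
The plan is to exploit the bi-invariance of $d_{\underline{L}}$: setting $b := gh^{-1}$ we have $d_{\underline{L}}(g,h) = \norm{b}_{\underline{L}}$ and $\mathrm{lk}(gh^{-1}) = \mathrm{lk}(b)$, so it suffices to prove the single-element bound
\[
\norm{b}_{\underline{L}} \geq \frac{1}{4k}\cdot \frac{(k+1)A - 1}{4k(k-1)}\,\abs{\mathrm{lk}(b)}
\]
for every $b \in \mathcal{B}_k$.

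The central tool, as announced in the abstract, will be a homogeneous quasimorphism $\mu : \Ham_c(\D,\omega) \to \R$ constructed from the Link Floer Homology of Cristofaro-Gardiner--Humili\`ere--Mak--Seyfaddini--Smith \cite{Cghmss21}. The pre-monotonicity hypothesis $A \in \left(\frac{1}{k+1}, \frac{1}{k}\right)$ is precisely what makes $\underline{L}$ a monotone Lagrangian link, so that one can associate spectral invariants to classes in the Link Floer complex of $\underline{L}$ and, by asymptotic homogenisation, extract a homogeneous quasimorphism $\mu$. A standard spectral estimate should yield that $\mu$ is Hofer-Lipschitz with constant $4k$, accounting for the first factor $\frac{1}{4k}$ in the announced inequality.

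The decisive property I expect $\mu$ to satisfy is a \emph{linking-detection} estimate: for every $\phi \in \Ham_{\underline{L}}(\D,\omega)$,
\[
\abs{\mu(\phi) - \lambda\cdot\mathrm{lk}(b(\phi,\underline{L}))} \leq D,
\]
where $\lambda = \frac{(k+1)A - 1}{4k(k-1)}$ is dictated by the monotonicity constant of $\underline{L}$ and $D$ is the defect of $\mu$. Granting this, for any $\phi \in \Ham_{b,\underline{L}}(\D,\omega)$ one invokes the homogenisation trick: by the morphism property noted in the remark, $\phi^n \in \Ham_{b^n,\underline{L}}$ with $\mathrm{lk}(b^n) = n\,\mathrm{lk}(b)$, and homogeneity of $\mu$ together with the above estimate applied to $\phi^n$ gives
\[
\abs{\mu(\phi) - \lambda\,\mathrm{lk}(b)} \leq \frac{D}{n}.
\]
Letting $n \to \infty$ yields $\mu(\phi) = \lambda\,\mathrm{lk}(b)$ exactly, and combining the Hofer-Lipschitz bound $\abs{\mu(\phi)} \leq 4k\,\norm{\phi}$ with the infimum over $\Ham_{b,\underline{L}}$ produces the desired inequality.

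The main obstacle is thus twofold: first, constructing $\mu$ out of Link Floer Homology and verifying that it is a homogeneous, Hofer-Lipschitz quasimorphism with the correct constant; and second, proving the linking-detection identity. The second point is the truly delicate step, since spectral invariants natively yield only inequalities: one has to track carefully how the action of a Hamiltonian fixing $\underline{L}$ and realising a braid $b$ decomposes into a Calabi-type piece (which the construction of $\mu$ must absorb, hence the earlier remark that $\Cal$ reappears in the definitions of the new quasimorphisms) and a contribution proportional to $\mathrm{lk}(b)$ coming from the topology of how the strands of $b$ wind around each other inside the monotone link $\underline{L}$. The homogenisation step then removes the unavoidable bounded error in a standard way.
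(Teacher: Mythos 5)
Your reduction is exactly the paper's: set $b=gh^{-1}$, invoke a homogeneous Hofer--Lipschitz quasimorphism on $\Ham_c(\D,\omega)$ whose restriction to $\Ham_{\underline{L}}(\D,\omega)$ equals a fixed multiple of $\mathrm{lk}(b(\phi,\underline{L}))$ (this is Theorem \ref{thm:qmorph}), and take the infimum over $\Ham_{b,\underline{L}}$; your bookkeeping of constants ($4k$-Lipschitz with slope $\lambda$ versus the paper's $2$-Lipschitz with slope $\frac{1}{2k}\lambda$) is just a rescaling. The problem is that this quasimorphism is the entire content of the theorem, and your sketch of how to obtain it both omits the key idea and, taken at face value, would fail. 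A single homogenised link spectral invariant $\mu_{k,\eta}$, even after absorbing a Calabi correction as you suggest, does \emph{not} satisfy your ``linking-detection'' estimate $\abs{\mu(\phi)-\lambda\,\mathrm{lk}(b(\phi,\underline{L}))}\leq D$: take $K$ supported near one disc $D_i$ and equal to $1$ on a neighbourhood of $L_i$; its time-$n$ flow preserves $\underline{L}$ pointwise, has trivial braid type, yet by (Lagrangian Control) the Calabi-corrected invariant evaluates to $\frac{n}{k}$, which is unbounded. So no amount of homogenisation can repair the estimate for such a $\mu$ --- the defect is not bounded on $\Ham_{\underline{L}}$.

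What the paper actually does, and what is missing from your plan, is to take the \emph{difference} of two such invariants attached to the same disc map extended to spheres of two different areas ($1$ and $kA$), so that the link has two different monotonicity constants $\eta_0\neq\eta_{kA-1}$: $Q_k(\phi)=\mu^{0}_{k,\eta_0}(\phi_0)-\mu^{kA-1}_{k,\eta_{kA-1}}(\phi_{kA-1})$. Comparing the two action functionals on a common capped generator (Equation \ref{eq:diffAction}), the Hamiltonian and symplectic-area terms cancel and only $(\eta_{s_1}-\eta_{s_2})[\hat{y}]\cdot\Delta$ survives; making this comparison legitimate requires the chain-level identification of the two Floer complexes (Lemmata \ref{lemma:chainMap} and \ref{lemma:PSS}), the rigidity of the spectrum when $\phi$ preserves $\underline{L}$ (Lemma \ref{lemma:pointSpectrum}), and finally the purely topological identity $[\hat{y}]\cdot\Delta=-\tfrac{1}{2}\mathrm{lk}(b(\phi,\underline{L}))$, proved by an unlinking induction after reducing to pure braids via iteration. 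None of these steps is routine, and your proposal's ``track how the action decomposes into a Calabi piece plus a linking contribution'' does not identify the mechanism (two sphere areas, hence two values of $\eta$) that isolates the linking term. As it stands, the proof of the crucial identity $\mu(\phi)=\lambda\,\mathrm{lk}(b(\phi,\underline{L}))$ on $\Ham_{\underline{L}}(\D,\omega)$ is a genuine gap.
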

\vspace{0.2 cm}
It is interesting to observe that as $A\to\frac{1}{k+1}$ the lower bound becomes 0, and this coincides with the area value for which the discs become displaceable in the complement of the others. The best estimate we can provide instead appears as $A\to\frac{1}{k}$, as could be expected.
\vspace{0.2 cm}

A corollary of Theorem \ref{thm:persLink} is about non-degeneracy of Hofer norms for braids with 2 strands.
\begin{cor}\label{thm:nondeg}
For any pre-monotone configuration $\underline{L}$ with two components of areas $A\in\left(\frac{1}{3}, \frac{1}{2}\right)$, the Hofer pseudonorm $\norm{\cdot}_{\underline{L}}$ on $\mathcal{B}_2\cong \Z$  is non degenerate.
\end{cor}

Our results do not solve this problem in full generality due to the fact that there exist braids with three or more strands which have 0 linking number but are not trivial.

\begin{rem}Since the appearance of this paper, G. Chen has proved non-degeneracy of the Hofer norms on braid groups with arbitrary number of strands in his work \cite{chen23}. The lower bounds he finds for the Hofer norm of non trivial braids do not depend however on the complexity of the braid itself. It would be interesting to find explicit lower bounds based on a measure of complexity of the braid which allow for a proof of non-degeneracy of these norms.
\end{rem}

Theorem \ref{thm:persLink} itself follows from the construction of a quasimorphism using Link Floer Homology, described in Section \ref{section:descrTheory} following the paper \cite{Cghmss21}.

\begin{thm}\label{thm:qmorph}
Let $\underline{L}$ be a pre-monotone Lagrangian link with $k$ components of area $A\in \left(\frac{1}{k+1}, \frac{1}{k}\right)$. There exists a Hofer $\left(3+\frac{1}{(k+1)A}\right)$-Lipschitz homogeneous quasimorphism $Q_k: \Ham_c(\D, \omega)\rightarrow\R$ such that if $\phi\in \Ham_{\underline{L}}(\D, \omega)$ has associated braid type $b(\phi, \underline{L})$, then
\[
Q_k(\phi)=\frac{1}{k}\frac{(k+1)A-1}{2(k-1)}\mathrm{lk}(b(\phi, \underline{L})).
\]
\end{thm}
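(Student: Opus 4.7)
The plan is to build $Q_k$ directly from the Lagrangian link spectral invariant introduced in \cite{Cghmss21}. To the pre-monotone link $\underline{L}\subset\D$ with component areas $A\in(\tfrac{1}{k+1},\tfrac{1}{k})$, that paper associates a spectral invariant $c_{\underline{L}}\colon\Ham_c(\D,\omega)\to\R$ built from an action filtration on a Heegaard-type Lagrangian Floer complex whose generators correspond to $k$-tuples of intersection points of $\underline{L}$ with its image. This invariant is $1$-Hofer-Lipschitz, is a partial quasimorphism, and after the standard homogenisation
\[
\mu_{\underline{L}}(\phi)\;:=\;\lim_{n\to\infty}\frac{c_{\underline{L}}(\phi^n)}{n}
\]
gives a homogeneous quasimorphism on $\Ham_c(\D,\omega)$, still $1$-Hofer-Lipschitz. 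I would then set
\[
Q_k(\phi)\;:=\;\mu_{\underline{L}}(\phi)-\tfrac{1}{k}\Cal(\phi).
\]
Since $\Cal$ is a homomorphism and $1$-Hofer-Lipschitz, $Q_k$ is a homogeneous quasimorphism with Hofer Lipschitz constant at most $2$.

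The heart of the proof is the evaluation of $Q_k$ on $\Ham_{\underline{L}}(\D,\omega)$. For $\phi$ with braid type $b:=b(\phi,\underline{L})$, I would pick a generating Hamiltonian $H=H_b+H_0$ where $H_b$ realises the rotation corresponding to $b$ in a thin tubular neighbourhood of $\underline{L}$, built from the explicit generator-models of Section \ref{section:B2}, and $H_0$ is small and supported in the complement of the link. After a small perturbation making all Floer generators non-degenerate, these generators sit in a collar of $\underline{L}$, and their actions decompose as
\[
\mathcal{A}(x)\;=\;\tfrac{1}{k}\Cal(H)\;+\;\alpha(x)\cdot\tfrac{(k+1)A-1}{4k(k-1)}\cdot\mathrm{lk}(b)\;+\;\varepsilon(x),
\]
where $\alpha(x)\in\Q$ is a combinatorial weight depending only on $x$, $\varepsilon(x)\to 0$ as the perturbation shrinks, and the factor $\tfrac{(k+1)A-1}{4k(k-1)}$ is the symplectic area of a Maslov-$2$ disc bounded by the torus $\underline{L}\subset\Sym^k(\D)$; each unit of linking number contributes exactly one such disc to the action computation in the symmetric product.

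Iterating $\phi$ multiplies both $\Cal$ and $\mathrm{lk}$ by $n$, so the homogenisation extracts the linear part and produces $\mu_{\underline{L}}(\phi)=\tfrac{1}{k}\Cal(\phi)+\tfrac{1}{2k}\cdot\tfrac{(k+1)A-1}{4k(k-1)}\mathrm{lk}(b)$, where the factor $\tfrac{1}{2k}$ arises from averaging the combinatorial weights $\alpha(x)$ over the generators minimising the action. Subtracting $\tfrac{1}{k}\Cal$ recovers precisely the stated value of $Q_k$. The principal obstacle is carrying out this action calculation: one must identify the relevant asymptotic minimiser in the Heegaard Floer complex attached to a link-preserving diffeomorphism, and recognise the numerical factors $\tfrac{(k+1)A-1}{4k(k-1)}$ and $\tfrac{1}{2k}$ as, respectively, the monotonicity area of $\underline{L}$ in $\Sym^k(\D)$ and the combinatorial average dictated by the structure of the generators. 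Once this identification is established, the quasimorphism inequality, homogeneity, and Hofer $2$-Lipschitz bound for $Q_k$ follow directly from the corresponding properties of $c_{\underline{L}}$ proved in \cite{Cghmss21} together with the fact that $\Cal$ is a $1$-Hofer-Lipschitz homomorphism.
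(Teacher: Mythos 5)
Your construction of $Q_k$ is not the paper's and, as written, it cannot satisfy the stated identity. You set $Q_k=\mu_{\underline{L}}-\tfrac{1}{k}\Cal$ for a \emph{single} homogenised link spectral invariant. Test this on $\phi=\phi^1_H$ with $H$ supported in the interior of one disc $D_i$ bounded by a component of $\underline{L}$: the braid is trivial, so the theorem forces $Q_k(\phi)=0$, but by the (Support Control) property $\mu_{\underline{L}}(\phi)=-\Cal(\phi)$, so your formula gives $-(1+\tfrac1k)\Cal(\phi)\neq 0$. Even if you repair the Calabi coefficient (the combination $\mu+\Cal$ used in Section \ref{section:B2} does kill this example), no single combination of one link quasimorphism and $\Cal$ can be proportional to the linking number on $\Ham_{\underline{L}}(\D,\omega)$: take $H$ autonomous, equal to a constant $c_i$ on a neighbourhood of each $L_i$; the braid is trivial, yet (Lagrangian Control) gives $\mu_{\underline{L}}(\phi)+\Cal(\phi)=\tfrac1k\sum_i c_i$, which is arbitrary. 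The value of one spectral invariant on a link-preserving diffeomorphism contains the Hamiltonian term $\int_0^1 Sym^k(H)(\mathbf{x})\,dt$ and the symplectic area of the capping, neither of which is determined by the braid type; your claimed action decomposition $\mathcal{A}(x)=\tfrac1k\Cal(H)+\alpha(x)\cdot\tfrac{(k+1)A-1}{4k(k-1)}\mathrm{lk}(b)+\varepsilon(x)$ is therefore false (also, $\tfrac{(k+1)A-1}{4k(k-1)}$ is not the area of a Maslov-2 disc — that area is the monotonicity constant $\lambda=A$ — it is the diagonal weight $\eta_0$ from the relation $\lambda=2\eta(k_i-1)+A_i$).

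The missing idea, which is the heart of the paper's proof, is to compare \emph{two} embeddings of $\D$ into spheres of different areas ($1$ and $kA$), for which the same pre-monotone link is $\eta$-monotone with two different constants $\eta_0$ and $\eta_{kA-1}$, and to define $Q_k=\mu^{0}_{k,\eta_0}(\phi_0)-\mu^{kA-1}_{k,\eta_{kA-1}}(\phi_{kA-1})$. For a link-preserving $\phi$ the spectrum is a single point modulo $\lambda\Z$ (Lemma \ref{lemma:pointSpectrum}), the two Floer complexes are identified by $Sym^k$ of a sphere diffeomorphism compatibly with cappings, $\Z$-actions and PSS (Lemmata \ref{lemma:chainMap}, \ref{lemma:PSS}), and in the difference of actions the Hamiltonian and cap-area terms cancel, leaving $(\eta_{s_1}-\eta_{s_2})[\hat{y}]\cdot\Delta$; finally $[\hat{y}]\cdot\Delta=-\tfrac12\mathrm{lk}(b(\phi,\underline{L}))$ and the $\tfrac1k$ comes from $c_{\underline{L}}=\tfrac1k c_{Sym^k(\underline{L})}$, which together produce the constant $\tfrac{1}{2k}\eta_0$. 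Your proposal skips exactly this cancellation mechanism, and the "averaging of combinatorial weights" yielding $\tfrac{1}{2k}$ is not substantiated; without the two-parameter comparison (or some substitute isolating the diagonal-intersection term) the evaluation of $Q_k$ on $\Ham_{\underline{L}}(\D,\omega)$ does not go through.
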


The proof of this Theorem will be given in Section \ref{section:proofMain}. A first step in the direction of this Theorem is given in \cite{kha11}, where the author constructs a family of quasimorphisms obtained from those defined in \cite{entPol03} to prove a weaker result when $\underline{L}$ has two components, one of them being fixed throughout the isotopy. In his case however there are no hypotheses made on the area bounded by the fixed component.

\begin{rem}
    We can refine the result from Theorem \ref{thm:qmorph} to non-homogenised spectral invariants on $\Ham_c(\D, \omega)$, the proof being identical.

    Moreover, there exists in fact a linear independent family of quasimorphisms with the feature that restricted to $\Ham_{\underline{L}}(\D, \omega)$ they are proportional to the linking of the braid associated to the diffeomorphism; the quasimorphism in Theorem \ref{thm:qmorph} happens to be the one with the largest proportionality constant.
\end{rem}

We end this section remarking that Theorem \ref{thm:persLink} gives for free a wide class of Hamiltonian diffeomorphisms which are in the kernel of Calabi and with asymptotic Hofer norm (which  we shall define in a moment) bounded away from 0.

Given a Hamiltonian diffeomorphism $\phi\in \Ham_c(\D, \omega)$, its asymptotic Hofer norm is defined as\begin{equation*}
    \norm{\phi}_\infty:=\lim_{n\to\infty}\frac{1}{n}\norm{\phi^n}.
\end{equation*}
Since $\Cal$ is a Hofer-Lipshitz homomorphism, it is trivial to see that every $\phi$ not in the kernel of Calabi has non-zero asymptotic Hofer norm. Thanks to Theorem \ref{thm:persLink} we can provide several examples of compactly supported Hamiltonian diffeomorphisms of the disc in the kernel of Calabi which have arbitrarily big asymptotic Hofer norm. Take any $\phi\in \Ham_{\underline{L}}(\D, \omega)$. Since the function
\begin{equation*}
    b(\cdot, \underline L): \Ham_{\underline{L}}(\D, \omega)\rightarrow \mathcal{B}_k
\end{equation*}
is a group homomorphism, we find
\begin{equation*}
    \mathrm{lk} \, b(\phi^n, \underline{L})=\mathrm{lk} \,(b(\phi, \underline{L})^n)=n\,\mathrm{lk} \,b(\phi, \underline{L}).
\end{equation*}
Consider now any $\psi\in \Ham_{\underline{L}}(\D, \omega)$ such that\begin{itemize}
    \item $\psi$ is supported in one of the discs bounded by a circle in $\underline{L}$ (this implies that $b(\psi, \underline L)$ is trivial);
    \item $\Cal(\psi)=-\Cal(\phi)$.
\end{itemize}

The diffeomorphism $\phi\circ \psi$ is the diffeomorphism of the kind we wanted to construct: $\Cal(\phi\circ \psi)=0$ and $b(\phi\circ\psi, \underline L)=b(\phi, \underline L)$, so that
\begin{equation*}
        \norm{\phi\circ \psi}_\infty\geq \frac{(k+1)A}{3(k+1)A+1}\frac{1}{k}\frac{(k+1)A-1}{2(k-1)}\abs{\mathrm{lk} \, b(\phi, \underline L)}.
    \end{equation*}

\subsection{Further directions}
The questions we have started discussing in this paper may be easily generalised to other, new cases.

\begin{qst}
    Given a quasimorphism $\tilde{Q}_k$ on the braid group $\mathcal{B}_k$, does there exist a Hofer Lipschitz, $\scc^0$-continuous quasimorphism  $Q_k$ on $\Ham_c(\D, \omega)$ such that, for a Lagrangian configuration $\underline{L}$ with $k$ components, the restriction of $Q_k$ to $\Ham_{\underline{L}}(\D, \omega)$ coincides with $\tilde{Q}_k$ up to a constant?
\end{qst}

We are here interpreting quasimorphisms on braid groups as measures of complexity of braids. Answering this question would be helpful to determine lower bounds on Hofer norms on braid groups, and to examine new braid-persistence phenomena.

\begin{qst}
    Is it possible to say something when the Lagrangian configuration is not pre-monotone?
\end{qst}
This happens, for instance, if one of the enclosed discs is displaceable in the complement of the others. In general if the circles bound discs of different areas they cannot be arbitrarily permuted using a Hamiltonian flow, if the resulting braid is not pure. The result by Khanevsky in \cite{kha11} makes again the first step in this direction. We believe that his proof may be generalised using our methods, but the result would be an estimate of Hofer norm based on relative linking with respect to one single disc of different area which does not move throughout the Hamiltonian isotopy. More work is needed to answer the question in more generality.

\begin{qst}
    Is it possible to define an analogous family of pseudodistances on  other surfaces, possibly with genus? Would it still be unbounded?
\end{qst}
Another work of Khanevsky's, \cite{kha15}, paves the way and shows how in the presence of genus things might become more unpredictable. He defines a notion of homological trajectory for Hamiltonian diffeomorphisms preserving a chosen disc, and proceeds to prove that if the genus is positive then one can realise arbitrarily complicated trajectories with bounded Hofer energy.

\begin{qst}
    Can $\norm{\cdot}_{\underline{L}}$ be used to approximate the Hofer norm of an arbitrary diffeomorphism on the disc?
\end{qst}

As it is remarked in \cite{Cghmss21}, if $\phi\in\Ham_c(\Sigma, \omega)$ there is a Lagrangian link with $k$ components such that $\phi$ can be $\varepsilon$-Hofer deformed to $\tilde{\phi}\in\Ham_{\underline{L}}(\Sigma, \omega)$. This means that if $\mathcal{L}$ is the family of all possible Lagrangian links on $\Sigma$,\[
\Ham_c(\Sigma, \omega)=\overline{\bigcup_{\underline{L}\in\mathcal{L}}\Ham_{\underline{L}}(\Sigma, \omega)}
\]
where the closure is taken with respect to the Hofer topology. The same remark however shows that for extremely fine links (as $k\to\infty$) the obtained braid type is trivial, so the Hofer pseudonorm on braid groups will not provide any meaningful information. Moreover, as our estimates rapidly  degrade for increasing $k$, it would be interesting to find an algorithm that, up to a $\varepsilon$-Hofer perturbation, finds a Lagrangian link respected by the diffeomorphism with the fewest components possible, and such that the associated braid is non-trivial. We believe the proof in Section \ref{section:B2} might be of help.
\vspace{0.5cm}

This paper is organised as follows. In Section \ref{section:descrTheory} we give a description of Link Floer Homology following \cite{Cghmss21},  in Section \ref{section:proofMain} we prove the main result. Section \ref{section:B2} contains a computation of the value of $Q_2$ when applied to braids with two strands (the strategy here is due to M. Khanevsky \cite{kha11}). The result in Section \ref{section:B2} is of course implied by the one in Section \ref{section:proofMain}, but we still include this proof separately as it does not require anything more than basic properties of the quasimorphisms constructed in Section \ref{section:descrTheory}.

\textit{Acknowledgments}: The author wishes to thank his PhD supervisor, Vincent Humilière, for the constant support and help provided, and Sobhan Seyfaddini for his questions. Thanks are also due to Julien Grivaux, for his effort in explaining properties of holomorphic functions between symmetric products, and to an anonymous referee for their patient job which greatly improved the paper.

\section{Link Floer Homology}\label{section:descrTheory}
The tool we use in the proof of our main result is the family of quasimorphisms defined in \cite{Cghmss21}. Let us now sketch their construction, referring to that paper for details. These quasimorphisms come as homogenisation of spectral invariants of a Lagrangian Floer theory associated to a collection of curves on a surface.

To fix notation, if $\Sigma$ is a closed surface, $Sym^k(\Sigma):=\Sigma^k/\mathfrak{S}_k$ denotes its $k$-fold symmetric product, and $\Delta$ is its singular locus, i.e. the fat diagonal
\[
\Delta=\Set{(z_1, \dots, z_k)\vert \exists i\neq j, z_i=z_j}/\mathfrak{S}_k.
\]
The action of $\mathfrak{S}_k$ is given by permuting the factors: for $(z_1, \dots, z_k)\in \Sigma^k$ and $\sigma\in \mathfrak{S}_k$,
\[
\sigma\cdot(z_1,\dots, z_k):=(z_{\sigma(1)}, \dots, z_{\sigma(k)}).
\]
If $\Sigma$ is also oriented, it is symplectic and the symplectic form $\omega^{\oplus k}$ descends to a singular symplectic form $\omega_{Sym^k(\Sigma)}$ on the symmetric product. While the symmetric product of manifolds in full generality is not a manifold but an orbifold, one can give a structure of complex manifold to symmetric products of Riemann surfaces. In particular $Sym^k(\C P^1)$ is biholomorphic to $\C P^k$. The symplectic structure on the quotient can be smoothed out using a procedure devised by Perutz following a paper by Varouchas: for details we refer to \cite[Section 7]{per05} and \cite{Cghmss21}. The fact that we use a Perutz-type symplectic form on the symmetric product will be in the background without being explicitly mentioned, but it is fundamental in the construction of the homology theory we are about to describe carried out in \cite{Cghmss21}.

In what follows, $X:=Sym^k(\sphere^2)$, and $\omega_X$ is the  singular symplectic form induced by the Fubini-Study metric.

Let $L_1\times \dots\times L_k=:\underline{L}\subset X$ be a collection of $k$ non-intersecting circles on the sphere (i.e. $\underline{L}\cap \Delta=\emptyset$). Let $(B_i)$ be the collection of connected components of $\sphere^2\setminus \bigcup_{i=1}^kL_i$, $k_i$ be the number of connected components of $\partial B_i$, and $A_i=Area(B_i)$. The collection $\underline{L}$ is said to be $\eta$-\textbf{monotone}, for some $\eta\geq 0$, if there exists a $\lambda>0$ such that for every $i$
\begin{equation}\label{eq:monotonicity}
\lambda = 2\eta(k_i-1)+A_i. 
\end{equation}

We remark that $Sym^k(\underline{L})$ is in fact a Lagrangian in the symmetric product, and the authors of \cite{Cghmss21} proceed to the definition of a Lagrangian Floer complex associated to any Hamiltonian deformation of it. More precisely, let $H:\sphere^2\times \sphere^1\rightarrow \R$ be a periodic Hamiltonian of time 1 map $\phi=\phi^1_H$. It naturally defines a Hamiltonian function $Sym^k(H)$ and a (non-smooth) Hamiltonian diffeomorphism of the symmetric product which we denote $Sym^k(\phi_H)$. Assume that for all $i\neq j\in \{1, \dots, k\}$ we have $\phi(L_i)\pitchfork L_j$, and let $\textbf{x}\in Sym^k(\phi_H)(Sym^k(\underline{L}))\cap Sym^k(\underline{L})$. Let us define the path $\boldsymbol\alpha(t)=\phi^{1-t}_H(\textbf{x})$, and consider $\tilde{\mathcal{S}}$, the set of all intersection points $Sym^k(\underline{L})\cap Sym^k(\phi_H)(Sym^k(\underline{L}))$ which are homotopic to $\boldsymbol{\alpha}$ through paths between $Sym^k(\phi_H)(Sym^k(\underline{L}))$ and $Sym^k(\underline{L})$, where we think of elements in $Sym^k(\phi_H)(Sym^k(\underline{L}))\cap  Sym^k(\underline{L})$ as constant paths from $Sym^k(\phi_H)(Sym^k(\underline{L}))$ to $Sym^k(\underline{L})$. A homotopy $\hat{y}:([0,1]\times[0,1], [0, 1]\times \{0, 1\})\rightarrow (X, Sym^k(\underline{L}))$ between an intersection point $y\in Sym^k(\phi_H)(Sym^k(\underline{L}))\cap Sym^k(\underline{L})$ and $\boldsymbol{\alpha}$ is called a \textbf{capping}. By \cite{Cghmss21}, Lemma 4.10, the image of the Hurewicz morphism $\pi_2(X, Sym^k(\underline{L}))\rightarrow H_2(X, Sym^k(\underline{L}))$ is freely generated by $s$ homology classes, $u_1, \dots, u_s$, where $s$ is the number of connected components in $\sphere^2\setminus \bigcup L_i$. In the case we are interested about there are $k+1$ connected components and all of them, except for one, are discs. In this case we have $k+1$ homology classes, $u_1, \dots, u_{k+1}$, and we get the refined information about their intersections with the diagonal:\[
u_i\cdot \Delta=2(k_i-1)
\]
where $k_i=1$ for $i=1, \dots, k$ and $k_{k+1}=k$.

Each of these homology classes may now be used to change capping of an orbit using the following method. If $u$ represents any of the $u_i$ above and $\hat{y}$ is a capped orbit, we may consider their concatenation $\hat{y}u=\hat{y}\#(\phi^{1-t}_Hu(s,t))$, and the quantity $\omega_X(u)+u\cdot \Delta$ does not depend on the choice of representative $u$. It makes sense then to define the set of capped orbits modulo equivalence
\[
\mathcal{S}=\Set{\hat{y}\vert y\in\tilde{\mathcal{S}}}/\sim
\]
where $\hat{x}\sim\hat{y}\Leftrightarrow x=y, \omega_X(\hat{x})+\eta\hat{x}\cdot\Delta=\omega_X(\hat{y})+\eta\hat{y}\cdot\Delta$. Define now\[
\widetilde{CF}^\bullet( Sym^k(\phi_H), \Sym^k(\underline{L}); \C)=\bigoplus_{\hat{y}\in \mathcal{S}}\C\cdot[\hat{y}]
\]

Recapping gives a $\Z$ action on $\widetilde{CF}^\bullet(Sym^k(\phi_H), \Sym^k(\underline{L}); \C)$, so that we can see it as a $\C[T, T^{-1}]$-module. The Floer complex we are interested in is the tensor product
\[
CF^\bullet(\phi_H, \underline L; \C)=\widetilde{CF}^\bullet(Sym^k(\phi_H), \Sym^k(\underline{L}); \C)\otimes_{\C[T, T^{-1}]}\Lambda
\]
with the Novikov field $\Lambda$
\[
\Lambda=\C[[T]][T^{-1}]=\Set{\sum_{i=0}^\infty a_iT^{b_i}\vert a_i\in \C, b_i\in\Z, b_i<b_{i+1}}.
\]
The differential on $CF^\bullet(\phi_H, \underline L; \C)$ as customary is defined by a count of holomorphic curves with Lagrangian boundary conditions: fix two intersections $y_i\in Sym^k(\phi_H)(Sym^k(\underline{L}))\cap Sym^k(\underline{L}), i=0, 1$ and a time-dependent almost complex structure on $X$; a $J-$holomorphic strip between $y_0$ and $y_1$ is a map $u: \R\times [0, 1]\rightarrow X$ satisfying
\begin{equation}
 \begin{cases}
 u(s, 0)\in Sym^k(\phi_H)(Sym^k(\underline{L})),\,\, u(s, 1)\in Sym^k(\underline{L})\\
 \lim_{s\to -\infty}u(s, t)=y_0, \,\,\lim_{s\to +\infty}u(s, t)=y_1\\
 (\partial_s+J_t\partial_t)u(s, t)=0
 \end{cases}  . 
\end{equation}
\begin{rem}\label{rem:natACS}
    Given a holomorphic structure $J$ on $\sphere^2$, one naturally induces a holomorphic structure on $X$, denoted $J_X$.
\end{rem}
Solutions to these equations belonging in moduli spaces of virtual dimension 0 (after holomorphic reparametrisation of $\R+i[0, 1]$) are regular, so that a differential can be well defined, and it can be proved that $\partial^2=0$\footnote{In \cite{Cghmss21} the authors define the complex with coefficients in a local system, and then prove that the differential squares to 0 using the trivial local system. In view of this, we just use complex coefficients for simplicity.}. Along these curves
\begin{equation}\label{eq:actionDefinition}
\mathcal{A}^\eta_H( [\hat{y}])=\int_0^1 Sym^k(H)(\textbf{x})\, dt-\int_\D \hat{y}^*\omega_X-\eta[\hat{y}]\cdot\Delta
\end{equation}
strictly decreases. The quantity $\mathcal{A}^\eta_H$ is called the \textbf{action}, and $CF^\bullet(\phi_H, \underline{L}; \C)$ is a filtered differential complex. We remark that to make sense of this, we need to observe that $\mathcal{A}^\eta_H$ is constant on the equivalence classes in $\mathcal{S}$, and to make the interaction between the $\Z$-action (or equivalently, the action of the formal variable $T$) and $\mathcal{A}^\eta_H$ explicit. As it is now classical (see for instance \cite{UshZh16}) it corresponds to a translation, and given our monotonicity requirement\[
\mathcal{A}^\eta_H( [\hat{y}]T)=\mathcal{A}^\eta_H([\hat{y}])-\lambda.
\]

Let $HF^\bullet_a(\phi_H, \underline{L}; \C)$ be the homology of the subcomplex
\[
CF^\bullet_a(\phi_H, \underline{L}; \C)=\bigoplus_{[\hat{y}]\in \mathcal{S},\,\, \mathcal{A}^\eta_H([\hat{y}])<a}\C\cdot[\hat{y}].
\]

In analogy with classical theories, we define the set $k\mathrm{Spec}(H:\underline{L})$ as the action values of capped intersection points. It is a closed and nowhere dense subset of $\R$ (\cite{Oh05}). The notation implies that elements in the \textbf{spectrum} $\text{Spec}(H:\underline{L})$ are action values of generators divided by $k$ (as in \cite[Definition 6.2]{Cghmss21}).

\begin{rem}
The definition of the action functional in Equation \ref{eq:actionDefinition} is an extension of the usual one: the term counting intersections with the diagonal depends on a procedure of inflation of a Perutz-type symplectic form one does in order to achieve monotonicity of the Lagrangian link in the symmetric product, when the circles satisfy the larger definition of $\eta$-monotonicity. For details, we refer to \cite[Remark 4.22]{Cghmss21}.
For the following it is necessary to notice that one makes no normalisation assumptions on the symplectic form in order to define the action as in \ref{eq:actionDefinition}.
\end{rem}

This Lagrangian Floer theory comes with its PSS isomorphisms (\cite{Cghmss21}, \cite{Zap15}), and as $\Lambda$-vector spaces $HF^\bullet(\phi_H, \underline{L}; \C)\cong H((\sphere^1)^k; \Lambda)$. Therefore $HF^\bullet(\phi_H, \underline{L}; \C)$ is non trivial, and it moreover has a multiplicative structure (\cite{ChoOh}), so let $e\in HF^\bullet(\phi_H, \underline{L}; \C)$ be the unit for its product and define\begin{align*}
c_{Sym^k(\underline{L})}(H)=\inf\{a\in \R\vert  e\in \mathrm{Im}(HF_a^\bullet(\phi_H, \underline{L}; \C)\\ \rightarrow HF^\bullet(\phi_H, \underline{L}; \C))\}
\end{align*}
and $c_{\underline{L}}:=\frac{1}{k}c_{Sym^k(\underline{L})}$. One of the fundamental results in \cite{Cghmss21} is the following Theorem:
\begin{thm}[\cite{Cghmss21}, Theorem 1.13]\label{prop:c_L}
For any $\eta$-monotone Lagrangian link $\underline{L}$ on $\Sigma$, the link spectral invariant\[
c_{\underline{L}}: \scc^\infty([0, 1]\times \Sigma)\rightarrow \R
\]
satisfies the following properties:
\begin{itemize}
    \item (Spectrality) For any $H$, $c_{\underline{L}}(H)\in \text{Spec}(H: L)$.
    \item (Hofer Lipschitz) For any $H, H'$ Hamiltonians,\[
    \int_0^1\min_{x\in \Sigma}(H_t(x)-H'_t(x))\, dt \leq c_{\underline{L}}(H)-c_{\underline{L}}(H')\leq \int_0^1\max_{x\in\Sigma}(H_t(x)-H'_t(x))\, dt.
    \]
    \item (Lagrangian Control) If for each  $i\in\{1,\dots, k\}$, we have $H_t\vert_{L_i}=s_i(t)$ for time-dependent constant $s_i:[0, 1]\rightarrow \R$, then\[
    c_{\underline{L}}(H)=\frac{1}{k}\sum_{i=1}^k\int_0^1s_i(t) dt
    \]
    and for a general Hamiltonian
    \[
    \frac{1}{k}\sum_{i=1}^k\int_0^1 \min_{x\in L_i}H_t(x)\, dt\leq c_{\underline{L}}(H)\leq \frac{1}{k}\sum_{i=1}^k\int_0^1\max_{x\in L_i}H_t(x)\, dt.
    \]
    \item (Subadditivity) For any $H, H'$, $c_{\underline{L}}(H\#H')\leq c_{\underline{L}}(H)+c_{\underline{L}}(H')$, if $H\#H'(x, t)=H_t(x)+H'_t((\phi^t_H)^{-1}(x))$.
    \item (Homotopy Invariance) If $H, H'$ are two normalised Hamiltonians with same time 1 map and which determine the same element in $\widetilde{\Ham}(\Sigma, \omega)$, then $c_{\underline{L}}(H)=c_{\underline{L}}(H')$.
    \item (Shift) If $H=H'+s$, for a function $s\in \scc^\infty([0, 1]; \R)$, then\[
    c_{\underline{L}}(H)=c_{\underline{L}}(H')+\int_0^1s(t)\, dt.
    \]
    \end{itemize}
\end{thm}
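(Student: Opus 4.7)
The plan is to establish each of the six properties via the standard Lagrangian Floer spectral invariant machinery, adapted to the $\Sym^k$-setting with the Perutz-smoothed symplectic form and the action functional \eqref{eq:actionDefinition}. The preliminary observations I would nail down first are: (i) $\mathrm{Spec}(H:\underline{L})$ is closed and nowhere dense in $\R$, and is discrete modulo the $\lambda$-shift induced by the $T$-action; (ii) pointwise $\Sym^k(H)(x_1,\dots,x_k)=\sum_i H_t(x_i)$, so $\max_X \Sym^k(H)=k\max_{\sphere^2}H$ and likewise for $\min$; and (iii) via PSS, $HF^\bullet(\Sym^k(H),\Sym^k(\underline{L});\C)\cong H^*((\sphere^1)^k;\Lambda)$ as a unital $\Lambda$-algebra, so that the unit $e$ is well defined.

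Granted these, most properties follow by essentially formal arguments. Spectrality is the usual min--max statement: the rank of the image $HF^\bullet_a\to HF^\bullet$ jumps precisely at the points of $\mathrm{Spec}(H:\underline{L})$ by (i), so the infimum defining $c_{\Sym^k(\underline{L})}(H)$ is attained on the spectrum. Hofer Lipschitz uses the energy estimate for a linear continuation $H_s=(1-s)H+sH'$: the induced chain map shifts action by at most $\int_0^1\max_X\Sym^k(H-H')\,dt=k\int_0^1\max_{\sphere^2}(H-H')\,dt$ by (ii), and the factor $k$ is cancelled by the normalisation $c_{\underline{L}}=\frac{1}{k}c_{\Sym^k(\underline{L})}$. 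Shift is direct: replacing $H$ by $H+s(t)$ shifts $\Sym^k(H)$ by $k\,s(t)$, hence shifts $\mathcal{A}^\eta_H$ by $k\int_0^1 s(t)\,dt$, which becomes $\int_0^1 s(t)\,dt$ after normalisation. Homotopy invariance is built in, since $\mathcal{A}^\eta_H$ is defined on capped orbits and hence descends from $\widetilde{\Ham}$. Subadditivity follows from the pair-of-pants product in Lagrangian Floer cohomology combined with $e\cdot e=e$, in the standard way.

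The most structurally distinctive step, which I would treat last and most carefully, is Lagrangian control. When $H_t|_{L_i}\equiv s_i(t)$, the Hamiltonian vector field is tangent to each $L_i$, so $\Sym^k(\phi^t_H)$ preserves $\Sym^k(\underline{L})$ setwise. After a $\scc^2$-small perturbation to achieve the required transversality (whose effect on $c_{\underline{L}}$ is controlled by Hofer Lipschitz), the intersection points lying in $\Sym^k(\underline{L})$ itself are distinguished and carry a constant capping, for which both the area term $\int \hat{y}^*\omega_X$ and the diagonal term $\eta\,[\hat{y}]\cdot\Delta$ in \eqref{eq:actionDefinition} vanish; the action at such a point is then exactly $\sum_i\int_0^1 s_i(t)\,dt$. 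A PSS computation identifying the unit $e$ with the fundamental class of $\Sym^k(\underline{L})$ shows that $c_{\Sym^k(\underline{L})}(H)$ is attained at precisely this filtration level, giving the equality. The two-sided estimate for a general Hamiltonian is then obtained by sandwiching $H$ between the Hamiltonians that are constantly $\min_{L_i}H_t$ and $\max_{L_i}H_t$ on each $L_i$ and invoking Hofer Lipschitz.

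The main obstacle, which is exactly what \cite{Cghmss21} has to overcome, is that $\Sym^k(\sphere^2)$ is not smoothly symplectic along $\Delta$: one must work with a Perutz-smoothed form, and the $\eta$-monotonicity relation \eqref{eq:monotonicity} is engineered so that the recapping shift $\omega_X(u)+\eta\,u\cdot\Delta$ is constant across the generators $u_i$ and hence $\mathcal{A}^\eta_H$ is well defined on the equivalence classes in $\mathcal{S}$. Verifying that the above arguments --- in particular the continuation-map energy estimate (which must be insensitive to strips that approach $\Delta$) and the PSS identification of $e$ with a cycle on $\Sym^k(\underline{L})$ --- genuinely survive in this non-smooth setting is the technical heart of the theory, and is where a self-contained proof would need to do serious work rather than just cite the general framework.
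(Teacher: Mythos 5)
This theorem is not proved in the paper at all: it is imported verbatim from \cite{Cghmss21} (Theorem 1.13), and the text explicitly defers to Section 6.4 of that reference, so there is no internal proof to compare your attempt against. Your outline is a reasonable reconstruction of the strategy of the cited work: spectrality from the fact that the action spectrum is closed, nowhere dense and periodic under the $\lambda$-shift of the $T$-action; Hofer Lipschitz from continuation-map energy estimates, with the factor $k$ coming from $\Sym^k(H)=\sum_i H(x_i)$ and cancelled by the normalisation $c_{\underline{L}}=\frac{1}{k}c_{\Sym^k(\underline{L})}$; subadditivity from the product and the unit; shift and homotopy invariance formally. Two places where your sketch is thinner than a complete argument needs. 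First, for the general-Hamiltonian Lagrangian control estimate, sandwiching plus Hofer Lipschitz only works if the comparison Hamiltonians, besides being the prescribed constants on each $L_i$, are chosen to dominate (resp.\ be dominated by) $H$ on all of $\Sigma$ — this is possible because their values away from $\underline{L}$ are unconstrained, but it is the global max/min over $\Sigma$, not over the link, that enters Hofer Lipschitz, so the choice must be made explicit. Second, the equality case is genuinely degenerate: when $H_t\vert_{L_i}=s_i(t)$ the flow preserves $\underline{L}$ setwise, every point of $\Sym^k(\underline{L})$ is an intersection point (a Morse--Bott situation), and perturbation plus spectrality pins the value only modulo the period $\lambda$; extracting the exact value requires the filtered PSS comparison (or a deformation to a model Hamiltonian) that you allude to in one sentence, and this, together with the analytic control of strips near the diagonal for the Perutz-type form, is where most of the work in \cite{Cghmss21} actually lies — as you yourself correctly flag in your closing paragraph.
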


The proof of this Theorem can be found in Section 6.4 of \cite{Cghmss21}.

\vspace{0.3cm}
Thanks to the property (Homotopy Invariance), $c_{\underline L}$ lifts to the universal cover $\widetilde{\Ham}(\Sigma, \omega)$ of the Hamiltonian group. If $\tilde\phi$ is represented by a normalised Hamiltonian $H$, we set
\begin{equation*}
c_{\underline{L}}(\tilde\phi):=c_{\underline L}(H).
\end{equation*}
In the case of a sphere, we may homogenise the spectral invariants $c_{\underline{L}}$ to find quasimorphisms on $\Ham(\sphere^2, \omega)$: if $\tilde{\phi}\in\widetilde{\Ham}(\sphere^2, \omega)$ let
\[
\mu_{\underline{L}}(\tilde{\phi})=\lim_{n\to\infty}\frac{c_{\underline{L}}(\tilde{\phi}^n)}{n},
\]
and since $\pi_1(\Ham(\sphere^2, \omega))$ is finite $\mu_{\underline{L}}$ only depends on the time 1 map. We can then formulate the following theorem, proved in Section 7 of \cite{Cghmss21}.
\begin{thm}[\cite{Cghmss21}, Theorems 7.6, 7.7]\label{prop:mu_k}
$\mu_{\underline{L}}:\Ham(\sphere^2)\rightarrow \R$ satisfies the following properties:
\begin{itemize}
    \item $\mu_{\underline{L}}$ only depends on the monotonicity constant $\eta$ and on the number of components of the Lagrangian link $\underline{L}$. We will then write $\mu_{k, \eta}$ for the quasimorphism associated to any $\eta$-monotone Lagrangian link of $k$ components.
    \item If $k\neq k'$ or $\eta\neq \eta'$, then $\mu_{k, \eta}$ and $\mu_{k', \eta'}$ are linearly independent.
    \item The differences $\mu_{k, \eta}-\mu_{k', \eta'}$ are $\scc^0$-continuous.
\end{itemize}
Moreover, the properties in Theorem \ref{prop:c_L} translate to the following ones for the $\mu_{k, \eta}$:
\begin{itemize}
    \item (Hofer Lipschitz) $\abs{\mu_{k, \eta}(\phi)-\mu_{k, \eta}(\psi)}\leq d_H(\phi, \psi)$.
    \item (Lagrangian Control) If $\phi=\phi^1_H$ for a mean normalised Hamiltonian $H$ such that $H_t\vert_{L_i}=s_i(t)$,\[
    \mu_{k, \eta}(\phi)=\frac{1}{k}\sum_{i=1}^k\int_0^1s_i(t) \, dt.
    \]
    In general, if $H$ is mean-normalised but not necessarily a time dependent constant on the Lagrangian link, then:
    \[
    \frac{1}{k}\sum_{i=1}^k\int_0^1 \min_{x\in L_i}H_t(x)\, dt\leq \mu_{k, \eta}(\phi)\leq \frac{1}{k}\sum_{i=1}^k\int_0^1\max_{x\in L_i}H_t(x)\, dt.
    \]
    \item (Support Control) If $\phi=\phi^1_H$ where $\text{Supp}(H)\subset \sphere^2\setminus \bigcup_i L_i$, then $\mu_{k, \eta}(\phi)=-\Cal(\phi)$.
\end{itemize}
\end{thm}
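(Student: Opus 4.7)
The plan is to construct $\mu_{\underline{L}}$ as the asymptotic average $\mu_{\underline{L}}(\tilde\phi) = \lim_{n\to\infty} c_{\underline{L}}(\tilde\phi^n)/n$ and then transfer every property in the statement from the corresponding property of $c_{\underline{L}}$ in Theorem \ref{prop:c_L}. Existence of the limit, together with the homogeneous quasimorphism property, is the standard consequence of Subadditivity combined with Spectrality and Fekete's lemma. That $\mu_{\underline{L}}$ descends from $\widetilde{\Ham}(\sphere^2, \omega)$ to $\Ham(\sphere^2, \omega)$ follows from Homotopy Invariance together with the finiteness of $\pi_1(\Ham(\sphere^2))$.

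The three Hofer/Lagrangian/Support properties transfer almost formally. For Hofer Lipschitz, the inequality $\abs{c_{\underline{L}}(H^{\#n})-c_{\underline{L}}(H'^{\#n})}\leq n\, d_H(\phi, \psi)$ divided by $n$ survives the limit. For Lagrangian Control with $H_t|_{L_i}=s_i(t)$, the right-hand side $\frac{1}{k}\sum_i \int_0^1 s_i(t)\,dt$ is linear in $H$ and additive under concatenation, so homogenisation yields exactly the same formula, from which the sandwich form follows as well. For Support Control, given $H$ with $\supp(H)\subset \sphere^2\setminus\bigcup_i L_i$, the mean-normalised $\tilde{H} = H - \int H\omega$ generates the same path of diffeomorphisms and satisfies $\tilde{H}|_{L_i} \equiv -\Cal(\phi^1_H)$, so the already-established Lagrangian Control for $\mu_{k,\eta}$ immediately gives $\mu_{k,\eta}(\phi^1_H)=-\Cal(\phi^1_H)$.

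For the assertion that $\mu_{\underline{L}}$ depends only on $(k,\eta)$, I would use a Moser-type argument: any two $\eta$-monotone $k$-component Lagrangian links on $\sphere^2$ have complementary regions with matching ordered area data by the definition of $\eta$-monotonicity in Equation \ref{eq:monotonicity}, so there is an area-preserving diffeomorphism conjugating one to the other, and on $\sphere^2$ such a diffeomorphism is automatically Hamiltonian isotopic to the identity. Conjugation-invariance of homogeneous quasimorphisms on any group then finishes the argument.

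The main obstacles are the remaining two assertions. Linear independence of the $\mu_{k,\eta}$ across distinct pairs $(k,\eta)$ I would approach by exhibiting, for each relevant pair, a test Hamiltonian on which Lagrangian Control and Support Control allow one to read off the value of $\mu_{k,\eta}$ explicitly, so that the resulting matrix of values is non-singular. The $\scc^0$-continuity of the differences $\mu_{k,\eta}-\mu_{k',\eta'}$ is the most delicate point: each individual $\mu_{k,\eta}$ carries a Calabi-like anomaly inherited from the inflation of the Perutz symplectic form needed to achieve strict monotonicity of $Sym^k(\underline{L})$ inside $Sym^k(\sphere^2)$, and this anomaly obstructs $\scc^0$-continuity. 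The key observation would be that the anomaly depends on $(k,\eta)$ only through an explicit affine correction that cancels in differences, leaving a $\scc^0$-continuous quantity; making this rigorous essentially amounts to reproducing the spectral estimates of Section 7 of \cite{Cghmss21}, which I expect to be the main technical hurdle.
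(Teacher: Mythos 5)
This theorem is not proved in the paper at all: it is imported verbatim from \cite{Cghmss21} (Theorems 7.6 and 7.7), the paper only recalling the definition $\mu_{\underline{L}}(\tilde\phi)=\lim_n c_{\underline{L}}(\tilde\phi^n)/n$ and the descent to $\Ham(\sphere^2)$ via finiteness of $\pi_1$. Measured against the cited proof, your outline is right in several places: the transfer of (Hofer Lipschitz), (Lagrangian Control) and (Support Control) by applying the corresponding property of $c_{\underline{L}}$ to $H^{\#n}$ and dividing by $n$ is essentially how it goes (note that $H_t\vert_{L_i}=s_i(t)$ forces the flow to preserve each $L_i$, so the iterates again satisfy the hypothesis), and your mechanism for $\scc^0$-continuity of differences is morally correct --- the clean implementation is the Entov--Polterovich--Py criterion (the difference vanishes on diffeomorphisms supported in small discs because the Calabi-type term cancels), which is exactly how the present paper argues for $Q_2$ in Section \ref{section:B2}.

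Two steps of your proposal are genuinely flawed. First, subadditivity plus Fekete only yields existence of the limit and subadditivity of $\mu_{\underline{L}}$; it does not bound the defect, so the statement that the homogeneous quasimorphism property is a ``standard consequence'' is wrong. One needs an additional spectral estimate (a uniform bound involving $c_{\underline{L}}(H)+c_{\underline{L}}(\bar H)$, coming from the algebraic structure of the link Floer homology), and this is one of the substantive points of Section 7 of \cite{Cghmss21}, not a formality. Second, the Moser-plus-conjugation argument for the claim that $\mu_{\underline{L}}$ depends only on $(k,\eta)$ cannot work as stated: for $k\geq 3$ two $\eta$-monotone links with $k$ components need not have homeomorphic complements (compare $k$ parallel circles, whose complement consists of two discs and $k-1$ annuli, with $k$ circles bounding disjoint discs, whose complement is $k$ discs and a planar domain with $k$ boundary circles); Equation \ref{eq:monotonicity} then forces different area profiles, so no area-preserving diffeomorphism of $\sphere^2$ carries one configuration to the other, and ``matching ordered area data'' fails. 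Your argument proves invariance only within a fixed topological type of configuration; the comparison across types is precisely the nontrivial content of Theorem 7.7 in \cite{Cghmss21} and requires a different argument. The linear-independence sketch (evaluation on test Hamiltonians via Lagrangian and Support Control) is acceptable in spirit, but as stated it is only a plan, and it must produce separating test elements for arbitrary finite subfamilies of the $\mu_{k,\eta}$.
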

As we are going to work with spheres not necessarily of area 1, say $a$, we shall write $\mu^a_{k, \eta}$ for the quasimorphisms on $\Ham(\sphere^2(a))$.
\begin{rem}
The properties listed in Theorem \ref{prop:c_L} are clearly interrelated, for instance (Shift) is an immediate consequence of (Hofer Lipschitz). The same is true for Theorem \ref{prop:mu_k}, and one remark is needed: for the (Support Control) property we are assuming that the symplectic volume of $\sphere^2$ is 1. Indeed, this properties follows from the (Shift) property of $c_{\underline{L}}$, the definition of $\mu_{k, \eta}$ and its (Lagrangian Control) property. In order to compute $\mu_{k, \eta}(\phi^1_H)$ we indeed need to normalise $H$, and this operation in general consists in defining $H'$ as follows
\[
H'_t(x)=H_t(x)-\frac{1}{Area(\sphere^2)}\int_{\sphere^2}H_t \omega
\]
and (Support Control) reads $\mu^{Area(\sphere^2)}_{k, \eta}(\phi^1_H)=-\frac{1}{Area(\sphere^2)}\text{Cal}(\phi^1_H)$. This fact is not essential but explains the relatively convoluted definition of the quasimorphism appearing in our main result.
\end{rem}

\section{Proof of Theorem \ref{thm:qmorph}}\label{section:proofMain}
The proof of the main result builds upon a generalisation of the (Lagrangian Control) property of the spectral invariants arising from Link Floer theory constructed in \cite{Cghmss21} and described in Section \ref{section:descrTheory}. The strategy goes as follows. Using a continuity property of the action when we look at a diffeomorphism fixing the Lagrangian link, we prove that the spectrum is a point, and then that embedding the disc into spheres of different areas gives a uniform translation of the spectrum. We end the proof finding a relation between the shift in the spectrum and the linking of the braid type defined by the diffeomorphism.

Since the Hamiltonian diffeomorphisms we are lead to work with here are degenerate with respect to the Lagrangian link, we now introduce a simple class of perturbations which we shall use to define the Floer complex.

Fix a perfect Morse function $f$ on $\sphere^1$ of critical values $\pm 1$, and for all $j$ fix a tubular neighbourhood
\begin{equation*}
    \tau_j: \sphere^1\times (-\delta, \delta)\rightarrow\D, \qquad \tau_j(\sphere^1, 0)=L_j
\end{equation*}
of $L_j$ in $\D$. We require the $\tau_j$ to be diffeomorphisms of disjoint images. Let $\rho:(-\delta, \delta)\rightarrow [0, 1]$ be a plateau function, equal to 1 on a neighbourhood of 0 and equal to 0 close to $\pm\delta$. Define
\begin{equation*}
    h_j:\D\rightarrow \R, \qquad h_j(z)=\begin{cases}
        \rho(r)f(\theta) & z=\tau_j(\theta, r)\\
        0 & z\not\in \mathrm{Image}(\tau_j)
    \end{cases}
\end{equation*}
and
\begin{equation*}
   l_\varepsilon:\D\rightarrow \R, \qquad
   l_\varepsilon(z):=\varepsilon\sum_{j=1}^kh_j(z).
\end{equation*}
We can therefore define the Link Floer Homology complex for the Hamiltonian $l_\varepsilon\# H$, which by definition generates the Hamiltonian diffeomorphism $\phi_{l_\varepsilon}\circ \phi_H$. Fix now a point $\textbf{x}$ of the Lagrangian link arbitrarily, and consider $\boldsymbol\alpha_\varepsilon$ reference path for the Floer complex of $l_\varepsilon\# H$ connecting $\phi_{l_\varepsilon}\circ \phi_H(\textbf{x})$ to $\textbf{x}$. The sequence of reference paths $\boldsymbol\alpha_\varepsilon$ converges to $\boldsymbol\alpha$, a reference path for $H$, as $\varepsilon\to 0$. This fact, together with the next lemma, lets us conclude that the spectrum of $l_\varepsilon\# H$ with respect to $\underline L$ converges to the spectrum of $H$ with respect to $\underline L$ as $\varepsilon\to 0$. The latter turns out to be a point, up to periodicity.
\begin{lem}\label{lemma:pointSpectrum}
Assume $H\in\scc^\infty(\sphere^2\times[0, 1]; \R)$ and that $\underline{L}$ is a monotone Lagrangian link on $\sphere^2$. Then if $\phi^1_H(\underline{L})=\underline{L}$ we find $k\mathrm{Spec}(H:\underline{L})=\alpha+\lambda\Z$, where $\alpha\in\R$, $\lambda=2\eta(k_j-1)+A_j$.
\end{lem}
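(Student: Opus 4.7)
The plan is to identify $\mathrm{Spec}(H:\underline{L})$ with the action values produced by the degenerate critical manifold of Hamiltonian chords coming from the invariance $\phi^1_H(\underline{L}) = \underline{L}$, and then show that (i) on this critical manifold the action is constant for coherently chosen cappings, and (ii) every change of capping shifts the action by an integer multiple of $\lambda$.

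For (i), since $\phi^1_H(\underline{L}) = \underline{L}$ also $Sym^k(\phi^1_H)(Sym^k(\underline{L})) = Sym^k(\underline{L})$, so every $y \in Sym^k(\underline{L})$ produces a Hamiltonian chord $\boldsymbol{\alpha}_y(t) := \phi^{1-t}_H(y)$ with both endpoints on $Sym^k(\underline{L})$, and the spectrum is the set of action values $\mathcal{A}^\eta_H(\hat{\boldsymbol{\alpha}}_y)$. Fix a basepoint $y_0$, a reference capping $\hat{\boldsymbol{\alpha}}_{y_0}$, and set $\alpha := \mathcal{A}^\eta_H(\hat{\boldsymbol{\alpha}}_{y_0})$. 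For any other $y_1$, pick a path $\gamma: [0,1] \to Sym^k(\underline{L})$ from $y_0$ to $y_1$ (possible because $Sym^k(\underline{L}) = L_1 \times \cdots \times L_k \cong (\mathbb{S}^1)^k$ is connected and disjoint from $\Delta$) and form the interpolating strip
\[
u(s, t) := Sym^k(\phi^{1-t}_H)(\gamma(s)).
\]
Its horizontal edges $u(\cdot, 0), u(\cdot, 1)$ lie in $Sym^k(\underline{L})$, so $u$ is a valid recapping between $\hat{\boldsymbol{\alpha}}_{y_0}$ and $\hat{\boldsymbol{\alpha}}_{y_1}$; moreover $Sym^k(\phi^t_H)$ preserves $X \setminus \Delta$, so $u$ avoids the diagonal and $u \cdot \Delta = 0$. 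A direct first-variation computation---based on the naturality identity $(\phi^t_H)^* X_{H_t} = X_{H_t \circ \phi^t_H}$ combined with the Lagrangian condition $\omega_X|_{Sym^k(\underline{L})} = 0$---shows that $\int u^* \omega_X$ exactly cancels the difference of the Hamiltonian action terms, so $\mathcal{A}^\eta_H(\hat{\boldsymbol{\alpha}}_{y_1}) = \alpha$.

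For (ii), any other capping of $\boldsymbol{\alpha}_y$ differs from the coherently transported one by an element of $\pi_2(X, Sym^k(\underline{L}))$, which as recalled in the excerpt is freely generated by $u_1, \ldots, u_{k+1}$ with $u_i \cdot \Delta = 2(k_i - 1)$ and $\omega_X(u_i) = A_i$. The monotonicity condition \eqref{eq:monotonicity} gives $A_i + 2\eta(k_i - 1) = \lambda$ uniformly in $i$, so each generator contributes exactly $-(A_i + 2\eta(k_i - 1)) = -\lambda$ to the action. Hence every recapping shifts the action by an integer multiple of $\lambda$, giving $\mathrm{Spec}(H:\underline{L}) \subseteq \alpha + \lambda\Z$, with equality realised by explicit recappings. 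The principal technical hurdle is the variational cancellation in step (i): this is the familiar principle that Hamiltonian chords form a critical manifold of constant action for a fixed Lagrangian boundary condition, but verifying it in this symmetric-product setting requires being careful about the time reparameterisation built into $\boldsymbol{\alpha}_y$, the signs in $\int u^* \omega_X$, and the fact that $Sym^k$ of a diffeomorphism preserves the diagonal so that no spurious contribution to $u \cdot \Delta$ arises.
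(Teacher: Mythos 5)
Your argument is correct, but it proves constancy of the action by a different mechanism than the paper. You use the Morse--Bott principle directly: since $\phi^1_H(\underline{L})=\underline{L}$, the chords $\boldsymbol{\alpha}_y$ sweep out a critical manifold, and the transport strip $u(s,t)=Sym^k(\phi^{1-t}_H)(\gamma(s))$ gives a coherent recapping along which the first-variation identity (with the convention $\iota_{X_H}\omega=-dH$) makes $\int u^*\omega_X$ cancel the difference of Hamiltonian terms, while $u\cdot\Delta=0$ because $\gamma$ lies in $Sym^k(\underline{L})$, which is disjoint from $\Delta$, and $Sym^k(\phi^t_H)$ preserves $X\setminus\Delta$; recapping then shifts the action by $\lambda\Z$ via the monotonicity relation, exactly as in the paper. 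The paper instead avoids any variational computation: it shows that $y\mapsto\mathcal{A}^\eta_H(\hat{y})$ descends to a \emph{continuous} map $Sym^k(\underline{L})\rightarrow\R/\lambda\Z$ (this is where the work goes, in constructing cappings coherently along a convergent sequence and showing the diagonal intersection number stabilises), and then concludes constancy from connectedness of $Sym^k(\underline{L})$ together with the fact that the spectrum is closed and of measure zero. Your route is more explicit and yields constancy without invoking the measure-zero/nowhere-dense structure of the spectrum; the paper's route is softer and sidesteps the sign and parametrisation bookkeeping you flag (which, as you note, is harmless here precisely because the chords and the transport strip stay in the smooth part $X\setminus\Delta$, where $Sym^k(\phi^t_H)$ really is the Hamiltonian flow of $Sym^k(H)$ for $\omega_X$). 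Two minor points you should make explicit: the spectrum only involves intersection points admitting cappings, i.e.\ those homotopic to the reference path as paths between the two Lagrangians --- the paper guarantees these are all of $Sym^k(\underline{L})$ via $\pi_1(X, Sym^k(\underline{L}))=0$, and your transport argument in any case produces cappings from a single one; and the precise value of $\omega_X(u_i)$ for the Perutz-type form is not needed, only the relation $\omega_X(u_i)+\eta\, u_i\cdot\Delta=\lambda$ furnished by $\eta$-monotonicity, which is what you actually use.
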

\begin{proof}
By hypothesis $Sym^k(\phi_H)(Sym^k(\underline{L}))=Sym^k(\underline{L})$, and using the homotopy long exact sequence one sees that $\pi_1(X, Sym^k(\underline{L}))=0$. The two conditions taken together imply that all points of the Lagrangian link are homotopic, as constant paths, to the reference path we use in the definition of the action. Since $Sym^k(\underline{L})$ is connected and the spectrum is totally disconnected, if the action is continuous as a function of the points in the Lagrangian link then it is necessarily constant.

As a start, let $y_n\to y$ be a convergent sequence of points in $Sym^k(\underline{L})$, and choose cappings $\hat{y}_n$ for the terms of the sequence. We can choose them in a homotopically consistent way at infinity as follows. Since $y_n\to y$ we have $\phi_H^t(y_n)\to \phi_H^t(y)$ for all $t\in [0, 1]$ by continuity of the flow, the convergence being uniform in $t$ with respect to an auxiliary Riemannian metric on the manifold. Let $d$ be the distance induced by said Riemannian metric. For $N$ big enough and any $n\geq N$, $d(Sym^k(\phi^t_H)(y_n), Sym^k(\phi_H^t)(y))\leq \varepsilon$: we can choose cappings $\hat{y}_n$ such that $\hat{y}_n\sim \hat{y}_m$ for $n, m>N$ for a homotopy supported in an $\varepsilon$ tubular neighbourhood of $y$ as above (essentially, we are considering a local trivialisation of the covering space of capped loops on the loops space, and choosing a fibre).

Under this hypothesis on $\hat{y}_n$, up to changing homotopy class under fixed endpoints of the cappings of the first elements of the sequence, homotopy and reparametrisation, we can assume that for all $m<n$,
\[
\hat{y}_m(s, t)=\hat{y}_n(s, t),\,\, \forall s\in \left[0, 1-\frac{1}{m}\right], \forall t\in [0, 1]
\]
and that
\[
\hat{y}_n(s, t)=y_n(t)\, \, \forall s\in \left[1-\frac{1}{n}, 1\right], \forall t\in [0, 1].
\]
This ensures that the $\hat{y}_n$ converge uniformly to a function $\hat{y}:[0, 1]\times [0, 1]\rightarrow X$, which will be our capping of $y$ of choice. To prove the uniform convergence it suffices to consider the $s$ variable close to 1, but in that case for $m, n$ big enough by triangle inequality $d_\infty(\hat{y}_n, \hat{y}_m)\leq 2\varepsilon$ and the sequence $(\hat{y}_n)_n$ is Cauchy.

With this choice of capping, it is obvious by uniform convergence that $\int_{\D}\hat{y}_n^*\omega_X\rightarrow \int_{\D}\hat{y}^*\omega_X$. We now show that $[\hat{y}_n]\cdot\Delta\rightarrow [\hat{y}]\cdot\Delta$, or equivalently that the intersection number stabilises. Note that $y([0, 1])\cap \Delta=\emptyset$ by uniqueness of solutions of ODEs, so that if we fix a small $\varepsilon$, the $\varepsilon$ tubular neighbourhood of $y$, denoted $V_\varepsilon$, is disjoint from $\Delta$ as well. Let us then choose an $m$ so large that $\forall s\in\left[1-\frac{1}{m}, 1\right]$, $\hat{y}_m(s, \cdot)\subset V_\varepsilon$. For any $n>m$, no new intersections with $\Delta$ can be created in the interval $s\in\left[1-\frac{1}{m}, 1\right]$ since the capping is there contained in $V_\varepsilon$, and all intersections will appear for $s\in \left[0, 1-\frac{1}{m}\right]$. By definition of $\hat{y}$, its intersections with $\Delta$ are located in $s\in \left[0, 1-\frac{1}{m}\right]$ and by definition of the sequence $[\hat{y}]\cdot \Delta=[\hat{y}_m]\cdot \Delta$.

The function $\mathcal{A}^\eta_H: Sym^k(\underline{L})\rightarrow\R/\Z$, $y\mapsto \mathcal{A}^\eta_H(\hat{y})$ for any capping $\hat{y}$ of $y$ is well defined, and the proof above shows that it is continuous. The image of this function is of zero measure by $\sigma$-additivity of the Lebesgue measure, thus nowhere dense and closed. The function $\mathcal{A}^\eta_H$ is therefore necessarily constant by connectedness of $Sym^k(\underline{L})$. Choosing a lift to $\R$ we then find that $k\mathrm{Spec}(H:\underline{L})=\alpha+\lambda\Z$.
\end{proof}

Let us apply Lemma \ref{lemma:pointSpectrum} to a pre-monotone link on $\D^2$ with $k$ components. Let $j_s:\D\rightarrow \sphere^2(1+s)$ be a symplectic embedding into a sphere of area $1+s$, $s\geq 0$, for any $\phi\in\Ham_c(\D)$ denote by $\phi_s \in\Ham(\sphere^2(1+s))$ the obvious extension by the identity, $\underline{L}_s:=j_s(\underline{L})$ the corresponding monotone link on $\sphere^2(1+s)$, and by $\eta_{s}$ the associated monotonicity constant. If $\phi$ is generated by a Hamiltonian $H\in \scc^\infty_c(\D\times \sphere^1; \R)$, let $H_s$ be the Hamiltonian generating $\phi_s$ coinciding with $H$ on the image of $j_s$. Remark now that the periodicity constant of the spectrum does not depend on $s$, it coincides in fact with the area $A$ of a contractible connected component of $\sphere^2(1+s_i)\setminus \bigcup_j L_j$. By Lemma \ref{lemma:pointSpectrum}, $k\mathrm{Spec}(\phi_s, \underline{L}_s)=\alpha(s)+\lambda\Z$. Taking two different values of $s$, $s_1$ and $s_2$,\[
k\mathrm{Spec}(\phi_{s_1}, \underline{L}_{s_1})+K(s_1, s_2)=k\mathrm{Spec}(\phi_{s_2}, \underline{L}_{s_2}).
\]
The constant $K(s_1, s_2)$ is determined up to $\lambda$-periodicity, and a value can be computed choosing any two capped orbits, one for $s_1$ and one for $s_2$. A natural choice is given by a Hamiltonian orbit $y\in Sym^k(\D)$ and cappings in $Sym^k(\sphere^2(1+s_i))$ which are contained, up to homotopy, in $Sym^k(\D)$. Let $\hat{y}_i$, $i=1, 2$, be the two capped orbits, and $\hat{y}$ the preimage of them by $j_{s_i}$. An immediate computation with the definition of the action shows that
\begin{equation}\label{eq:diffAction}\mathcal{A}^{\eta_{s_2}}_{H_{s_2}}(\hat{y}_2)-\mathcal{A}^{\eta_{s_1}}_{H_{s_1}}(\hat{y}_1)=\frac{k}{1+s_2}\Cal(\phi_H)-\frac{k}{1+s_1}\Cal(\phi_H) + (\eta_{s_1}-\eta_{s_2})[\hat{y}]\cdot\Delta.
\end{equation}
The symplectic areas of the cappings in fact coincide in the two cases, the Calabi terms compensate for the different normalisations of the Hamiltonians, while the terms counting the intersections of the cappings with the diagonal only coincide up to the scaling factors $\eta_{s_i}$. The quantity $\eta_{s_1}-\eta_{s_2}$ attains its maximum absolute value when $s_1=(k+1)A-1$ and $s_2=0$, since the monotonicity constant $\eta_s$ has maximum at $s=0$ and minimum at $s=(k+1)A-1$.

\begin{defn}We define
   $Q_k:\Ham_c(\D)\rightarrow \R,$ \begin{equation*}
   Q_k(\phi)=\mu_{k, \eta_{0}}^{1}(\phi_0)+\Cal(\phi)-\left(\mu_{k, \eta_{(k+1)A-1}}^{(k+1)A}(\phi_{(k+1)A-1})+\frac{1}{(k+1)A}\Cal(\phi)\right)
\end{equation*}
\end{defn}

This definition generalises the one given at the beginning of Section \ref{section:B2} to any number of strands.

The next results imply that, if $\phi\in\Ham_{\underline L}(\D, \omega)$, then $-\eta_0[\hat{y}]\cdot \Delta=kQ_k(\phi)$ for some choice of capped intersection point $\hat y$ (the factor $k$ appears in the normalisation one applies when passing from $c_{Sym^k(\underline{L})}$ to $c_{\underline{L}}$, see Section \ref{section:descrTheory}). Fix a diffeomorphism of the sphere
\[
f_{s_1}^{s_2}:\sphere^2(1+s_1)\rightarrow \sphere^2(1+s_2)
\]
and assume for the moment that $s_1, s_2>0$. We require that the following diagram commute:
\[
\begin{tikzcd}
\sphere^2(1+s_1) \arrow[r, "f_{s_1}^{s_2}"]&\sphere^2(1+s_2)\\
\D\arrow[u, "j_{s_1}"]\arrow[ur, "j_{s_2}"]&
\end{tikzcd}.
\]
In particular, since  $j_{s_i}$ is symplectic for $i=1, 2$, by the commutativity of the above diagram we have that $f_{s_1}^{s_2}$ is symplectic between the images of $\D$ in the two different spheres and globally preserves the orientations given by the symplectic forms. $f_{s_1}^{s_2}$ induces a bijective correspondence between the intersection points which, after being capped, generate the Floer complexes $CF(\phi_{s_1}, \underline{L}_{s_1})$ and $CF(\phi_{s_2}, \underline{L}_{s_2})$ for any regular $\phi\in \Ham_c(\D)$. We would like to use $\psi_{s_1}^{s_2}:=Sym^k(f_{s_1}^{s_2})$ to define a chain-isomorphism, and the first step to do so is checking that $\psi_{s_1}^{s_2}$ commutes with the $\Z$-action on both sides given by recapping. Before stating the lemma, recall that $\psi_{s_1}^{s_2}$ is everywhere smooth, because it is a globally defined biholomorphism $\C P^k\rightarrow \C P^k$ in the identification we mentioned in Section \ref{section:descrTheory} (the complex structures on the two copies of $\C P^k$ are then isomorphic, even though one is the standard complex structure and the other is induced by the pushforward by $f_{s_1}^{s_2}$ of the standard complex structure). Two ways to see it for instance are applying the Removable Singularity Theorem on the diagonal (where the function may not be holomorphic), or checking by hand using Cauchy's Integral Formula that, even when a point of $Sym^k (\C P^1)$ has non-trivial isotropy, $\psi_{s_1}^{s_2}$ may be developed in a power series using the coordinates of $\C P^k$.
\begin{lem}
   If $\hat{y}$, $\hat{y}'$ are two equivalent cappings for an intersection point $y$, then $\psi_{s_1}^{s_2}(\hat{y})$, $\psi_{s_1}^{s_2}(\hat{y}')$ are two equivalent cappings for the intersection point $\psi_{s_1}^{s_2}(y)$. Moreover, the associated $\Z$-action induces a translation by multiples of $\lambda$ of the action for both $CF(\phi_{s_1}, \underline{L}_{s_1})$ and $CF(\phi_{s_2}, \underline{L}_{s_2})$.
\end{lem}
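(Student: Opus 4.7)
The plan is to reduce both parts of the statement to the single topological fact that $\psi_{s_1}^{s_2}$ identifies, up to a consistent labelling, the distinguished generators of the Hurewicz image $\pi_2(X, Sym^k(\underline{L}_{s_j})) \to H_2(X, Sym^k(\underline{L}_{s_j}))$ described in Lemma 4.10 of \cite{Cghmss21}. Write $u_1^{s_j}, \dots, u_{k+1}^{s_j}$ for these generators $(j=1,2)$, with $u_i^{s_j}$ the class of the capping disc associated to the $i$-th connected component of $\sphere^2(1+s_j) \setminus \bigcup_\ell L_\ell^{s_j}$; the first $k$ indices correspond to the contractible discs, the last one to the unbounded region. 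Since any two cappings $\hat{y}, \hat{y}'$ of the same intersection point $y$ differ by the concatenation of a relative sphere class, their formal difference is represented by $[\hat{y}] - [\hat{y}'] = \sum_i n_i u_i^{s_1}$ in $H_2(X, Sym^k(\underline{L}_{s_1}))$.

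The crucial step is the verification that $(\psi_{s_1}^{s_2})_* u_i^{s_1} = u_i^{s_2}$ for every $i$. From the commutative diagram defining $f_{s_1}^{s_2}$, its restriction to the image of $j_{s_1}$ is a symplectomorphism onto the image of $j_{s_2}$ sending each $L_i^{s_1}$ to $L_i^{s_2}$ and each contractible region $B_i^{s_1}$ $(i=1, \dots, k)$ onto $B_i^{s_2}$; since $f_{s_1}^{s_2}$ is a global orientation-preserving diffeomorphism of the sphere, it must also send $B_{k+1}^{s_1}$ onto $B_{k+1}^{s_2}$. Each $u_i^{s_j}$ can be represented by a capping disc sweeping through $B_i^{s_j}$ while the other $k-1$ marked points remain stationary on the other circles, so $\psi_{s_1}^{s_2}$ maps one representative to a representative of the other, yielding the claimed identification in homology. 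Intersection numbers with $\Delta$ are preserved (topological invariance), and by the definition of $\lambda$ (the common area of any contractible region, which is unaffected by enlarging the ambient sphere) the monotonicity identity $\omega_{X_{s_j}}(u_i^{s_j}) + \eta_{s_j}\, u_i^{s_j} \cdot \Delta = \lambda$ holds with the \emph{same} $\lambda = A$ for both values of $j$.

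Given these inputs, the first assertion follows: equivalence $\hat{y} \sim \hat{y}'$ on the $s_1$ side reads $\sum_i n_i \lambda = 0$, hence $\sum_i n_i = 0$ as $\lambda = A > 0$, and applying the identification $(\psi_{s_1}^{s_2})_* u_i^{s_1} = u_i^{s_2}$ together with monotonicity on the $s_2$ side gives $\omega_{X_{s_2}}(\psi \hat{y}) + \eta_{s_2} \psi \hat{y} \cdot \Delta - \omega_{X_{s_2}}(\psi \hat{y}') - \eta_{s_2} \psi \hat{y}' \cdot \Delta = \sum_i n_i \lambda = 0$. The second assertion then follows by applying the same identification to the generator of the $\Z$-action, which is recapping by any fixed $u_i$: this shifts the action by $\lambda$ on both sides, and the value of $\lambda$ is the same constant on both. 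The main obstacle I anticipate is making the topological identification $(\psi_{s_1}^{s_2})_* u_i^{s_1} = u_i^{s_2}$ precise for the non-contractible generator $u_{k+1}$, because the symplectic areas of $B_{k+1}^{s_1}$ and $B_{k+1}^{s_2}$ genuinely differ; however, at the level of the combined invariant $\omega_X + \eta \cdot \Delta$ the two sides agree by monotonicity, which is exactly the numerical compatibility needed for the argument to close.
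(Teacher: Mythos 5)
Your proof is correct and lives in the same framework as the paper's: both arguments reduce the statement to classes in the image of the Hurewicz morphism and exploit that the monotonicity constant $\lambda=A$ (the area of a disc bounded by a link component) is unchanged when the ambient sphere is enlarged. The difference is in the transfer step. The paper rewrites the combined quantity $\langle\omega_X,u\rangle+\eta\,[u]\cdot\Delta$ as $\tfrac{\lambda}{2}\mu(u)$ via Lemma 4.19 of \cite{Cghmss21}, so that equivalence of cappings and the size of the $\Z$-shift become statements about Maslov indices, and then uses that $\psi_{s_1}^{s_2}$ is a biholomorphism, hence preserves Maslov indices (\cite{McDuffSalamon94}, Theorem C.3.7). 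You instead work with the explicit generators $u_1,\dots,u_{k+1}$ of the Hurewicz image (Lemma 4.10 of \cite{Cghmss21}), use that $f_{s_1}^{s_2}$ maps each complementary region of the link onto the corresponding region to get $({\psi_{s_1}^{s_2}})_*u_i^{s_1}=u_i^{s_2}$, and compute directly from the monotonicity relation that every generator has combined invariant $\lambda=A$ on either side. Your route never uses holomorphicity of $\psi_{s_1}^{s_2}$, only that it comes from an orientation-preserving diffeomorphism respecting the link, which is arguably more elementary; the paper's route avoids the generator bookkeeping (in particular for the non-contractible region) at the cost of invoking the Maslov-index machinery. The obstacle you flag at the end is not a real gap: the identification $({\psi_{s_1}^{s_2}})_*u_{k+1}^{s_1}=u_{k+1}^{s_2}$ is purely topological (the differing areas of the outer regions enter only through $\eta_{s_j}$, and the combined invariant is $\lambda$ on both sides exactly as you note); alternatively, the coefficient sum $\sum_i n_i$ you use is half the Maslov index of the difference class, which recovers the paper's argument. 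Be aware that, exactly like the paper, you implicitly treat the difference of two cappings of the same intersection point as a class in the Hurewicz image, so on this point your proof is at the same level of rigour as the original.
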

\begin{proof}
Start by remarking that $\psi_{s_1}^{s_2}$ induces an identification between the relative homology groups: if $X_{s_i}$ denotes the quotient $\sphere^2(1+s_i)^k/\mathfrak{S}_k$, and $L_{s_i}:=Sym^k(j_{s_1})(Sym^k(\underline{L}))$ $i=1, 2$, then
    \[
    {\psi_{s_1}^{s_2}}_*: H_2(X_{s_1},L_{s_1}; \Z)\xrightarrow{\sim} H_2(X_{s_2}, L_{s_2}; \Z).
    \]
The restriction of ${\psi_{s_1}^{s_2}}_*$ to the image of the Hurewicz morphism $H_2^D(X_{s_1}, L_{s_1}; \Z)\leq H_2(X_{s_1}, L_{s_1}; \Z)$ is still an isomorphism. To check that the equivalence classes of cappings are respected it suffices to check that given two classes $u, u'$ in $H^D_2(X_{s_1}, L_{s_1}; \Z)$ satisfying\[
\langle\omega_{X_{s_1}}, u\rangle+\eta_{s_1}[u]\cdot\Delta_{s_1}=\langle\omega_{X_{s_1}}, u'\rangle+\eta_{s_1}[u']\cdot\Delta_{s_1}
\]
then\[
\langle\omega_{X_{s_2}}, {\psi_{s_1}^{s_2}}_*u\rangle+\eta_{s_2}[{\psi_{s_1}^{s_2}}_*u]\cdot\Delta_{s_2}=\langle\omega_{X_{s_2}}, {\psi_{s_1}^{s_2}}_*u'\rangle+\eta_{s_2}[{\psi_{s_1}^{s_2}}_*u']\cdot\Delta_{s_2}.
\]
Now, Lemma 4.19 from \cite{Cghmss21} shows that\[
\langle\omega_{X_{s_1}}, u\rangle+\eta_{s_1}[u]\cdot\Delta_{s_1}=\frac{\lambda}{2}\mu(u)
\]
for each $u\in H_2^D(X_{s_1}, L_{s_1}; \Z)$ where $\mu\in H^2(X_{s_1}, L_{s_1}; \Z)$ is the Maslov class; an analogous statement holds for classes in $H_2^D(X_{s_2}, L_{s_2}; \Z)$. In light of this, $\hat{y}$ and $\hat{y}'$ are equivalent cappings if and only if their images through the Hurewicz morphism, denoted $u$ and $u'$ respectively, satisfy
\[
\mu(u)=\mu(u').
\]
An analogous result is true for $\psi_{s_1}^{s_2}u$ and $\psi_{s_1}^{s_2}u'$. To prove the first statement it is now enough to show that\[
\mu(u)=\mu({\psi_{s_1}^{s_2}}_*u)
\]
and the analogous conclusion for $u'$. These are true because $\psi_{s_1}^{s_2}$ is a biholomorphism, and thus preserves the Maslov indices (see \cite[Theorem C.3.7]{McDuffSalamon94}).

To prove the second statement, we need to prove that the generators of $H_2^D(X_{s_i}, L_{s_i}; \Z)$ act the same way on the action for $i=1, 2$. Notice that in both cases $\lambda_1=\lambda_2=A=:\lambda$ the area of a disc bounded by a component of the Lagrangian link. Now, capping by one of the generators of $H_2^D(X_{s_i}, L_{s_i}; \Z)$ shifts the action by $\lambda=A$ in both cases since the generators have Maslov index 2, see Corollary 4.8 and Lemma 4.19 in \cite{Cghmss21}.
\end{proof}
Fix an $\omega$-tame almost complex structure on $\sphere^2(1+s_1)$ and push it forward by $\psi_{s_1}^{s_2}$. Let $H$ be a Hamiltonian generating $\phi$, and $\Psi_{s_1}^{s_2}$ be the morphism of Floer groups induced by $\psi_{s_1}^{s_2}$ defined the following way. If $\hat y$ is a capped intersection point in $L_{s_1}\pitchfork Sym^k(\phi_{s_1})L_{s_1}$,
\begin{equation*}
    [\hat y]\mapsto [\psi_{s_1}^{s_2}\circ \hat y].
\end{equation*}
Remark that $\psi_{s_1}^{s_2}\circ \hat y$ is a capped intersection point in $L_{s_2}\pitchfork Sym^k(\phi_{s_2})L_{s_2}$. Extend the above definition by $\Lambda$-linearity to obtain the morphism of Floer chain groups $\Psi_{s_1}^{s_2}: CF(\phi_{s_1}, \underline{L}_{s_1})\rightarrow CF(\phi_{s_2}, \underline{L}_{s_2})$.

\begin{lem}\label{lemma:chainMap}
Let $\phi:\D\rightarrow \D$ be a Hamiltonian diffeomorphism of the disc, such that $\phi(\underline{L})\pitchfork\underline{L}$. Then $\Psi_{s_1}^{s_2}$ is a chain isomorphism, and in particular $HF(\phi_{s_1}, \underline{L}_{s_1})\simeq HF(\phi_{s_2}, \underline{L}_{s_2})$.
\end{lem}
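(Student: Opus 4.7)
The plan is to upgrade the chain-level correspondence already set up to a chain isomorphism by transporting the Floer data through $\psi_{s_1}^{s_2}$. The previous lemma gives a $\Lambda$-linear bijection $\Psi_{s_1}^{s_2}$ between capped generators of $CF(\phi_{s_1},\underline{L}_{s_1})$ and of $CF(\phi_{s_2},\underline{L}_{s_2})$, so it remains only to verify that this bijection intertwines the Floer differentials.

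To do so, I would fix a time-dependent almost complex structure $J^{(1)}$ on $X_{s_1}$, regular for the Floer equation defining $CF(\phi_{s_1},\underline{L}_{s_1})$, and set $J^{(2)}:=(\psi_{s_1}^{s_2})_\ast J^{(1)}$ on $X_{s_2}$. This pushforward is well defined and smooth since $\psi_{s_1}^{s_2}$ is a biholomorphism of $\mathbb{C}P^k$ in the identification from Remark \ref{rem:natACS}, and in particular a diffeomorphism. The key observation is tautological: $u:\R\times[0,1]\to X_{s_1}$ is $J^{(1)}$-holomorphic if and only if $\psi_{s_1}^{s_2}\circ u$ is $J^{(2)}$-holomorphic. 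Moreover, since $f_{s_1}^{s_2}\circ j_{s_1}=j_{s_2}$, $\psi_{s_1}^{s_2}$ sends $L_{s_1}$ to $L_{s_2}$, and because it conjugates $\Sym^k(\phi_{s_1})$ into $\Sym^k(\phi_{s_2})$ (as $f_{s_1}^{s_2}$ intertwines $\phi_{s_1}$ and $\phi_{s_2}$, both supported in the image of $\D$), it also sends $\Sym^k(\phi_{s_1})(L_{s_1})$ to $\Sym^k(\phi_{s_2})(L_{s_2})$. Thus boundary conditions and asymptotic intersection points of Floer strips are preserved, yielding a bijection between the relevant moduli spaces, with matching virtual dimensions since the linearised Cauchy--Riemann operators are conjugate under $\psi_{s_1}^{s_2}$.

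The main obstacle is that $J^{(2)}$ need not be globally $\omega_{X_{s_2}}$-tame, because $f_{s_1}^{s_2}$ is only symplectic on $j_{s_1}(\D)$ and not on the whole sphere. Global tameness is, however, only used to bound the energy of pseudoholomorphic strips by their topological action: since the bijection above preserves the capping and hence the topological action, the energy bound available on $X_{s_1}$ transports to the corresponding bound on $X_{s_2}$, and Gromov compactness goes through. Alternatively, one may homotope $J^{(2)}$ outside a neighbourhood of all the Lagrangians and intersection points to a globally tame ACS; invariance of Floer homology under the choice of compatible ACS then guarantees that this does not alter the differentials coming from strips whose images are confined, by their action, to a neighbourhood of $\Sym^k(j_{s_2}(\D))$.

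Combining these points with the previous lemma, $\Psi_{s_1}^{s_2}$ is a $\Lambda$-linear chain map. Its inverse is $\Psi_{s_2}^{s_1}$ induced by $(f_{s_1}^{s_2})^{-1}$: by the same argument it is a chain map, and $\Psi_{s_2}^{s_1}\circ\Psi_{s_1}^{s_2}=\mathrm{Id}$ already on capped generators. Hence $\Psi_{s_1}^{s_2}$ is a chain isomorphism, from which $HF(\phi_{s_1},\underline{L}_{s_1})\simeq HF(\phi_{s_2},\underline{L}_{s_2})$ follows immediately.
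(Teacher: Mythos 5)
Your overall strategy is the same as the paper's (transport all Floer data through $\psi_{s_1}^{s_2}$ and identify generators and moduli spaces by post-composition), but as written it skips the constraint that does all the work in this theory. In the setting of \cite{Cghmss21} the differential is only defined for almost complex structures in the class $\mathcal{J}(\Delta)$: structures that coincide with the natural integrable structure $J_{X}$ on a neighbourhood of the fat diagonal and are $\omega_X$-tame elsewhere; transversality, positivity of intersections with $\Delta$, and the compatibility of the count with the $\eta[\hat{y}]\cdot\Delta$ term in the action are established only within that class. Pushing forward an arbitrary regular $J^{(1)}$ gives no control near $\Delta$ — and the diagonal is precisely where smoothness of the pushforward is delicate, since $\Sym^k$ of a merely smooth map need not be differentiable there; the claim that $\psi_{s_1}^{s_2}$ is a biholomorphism refers to the complex structure on the target induced by $f_{s_1}^{s_2}$, not to an arbitrary $(\psi_{s_1}^{s_2})_*J^{(1)}$. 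The paper's proof therefore starts from $J^{s_1}\in\mathcal{J}^1(\Delta)$, defines $J^{s_2}$ to be the natural structure near $\Delta$ and the pushforward away from it, and checks the two definitions glue smoothly because $\psi_{s_1}^{s_2}$ is holomorphic for the induced structures near the diagonal. Without this step your ``tautological'' identification of moduli spaces does not produce an admissible differential on the $s_2$ side.

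Your treatment of tameness is also not sound. Tameness is not used merely to convert topological action into an energy bound: it makes the energy density of a holomorphic strip pointwise nonnegative and provides the gradient bounds entering Gromov compactness, so a bound ``transported'' from $X_{s_1}$ gives nothing for a non-tame $J^{(2)}$; likewise, invariance of Floer homology under change of almost complex structure is only available within the tame admissible class, so it cannot be invoked to compare with a non-tame structure, and there is no reason why strips of small action should stay near $\Sym^k(j_{s_2}(\D))$. The paper sidesteps all of this by restricting to a neighbourhood $\mathcal{U}^1$ of the natural structure, where openness of the tameness condition guarantees the pushforward is tame, and then runs a Baire-category argument to find a $J^{s_1}$ that is simultaneously regular and whose pushforward is a regular element of $\mathcal{J}^2(\Delta)$ (rather than asserting regularity transfers for an arbitrary choice); finally it fixes orientations by transporting a spin structure on $\Sym^k(\underline{L})$, a point you omit. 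These are genuine gaps you would need to close for the argument to go through.
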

\begin{proof}
Let us denote by $\partial^{s_i}$, $i=1, 2$, the differentials of the two complexes. We want to prove that\[
\partial^{s_2}\circ \Psi_{s_1}^{s_2}=\Psi_{s_1}^{s_2}\circ \partial^{s_1}.
\]
To define the Floer complex, one needs to achieve transversality in the symmetric product. We consider a class of almost complex structures which coincide with the one induced by the projection to the quotient
\begin{equation}
    \label{eq:quotProj}\sphere^2(1+s_i)^k\rightarrow\sphere^2(1+s_i)^k/\mathfrak{S}_k=:X_{s_i}
\end{equation}
near the diagonal. For $i=1, 2$ let
\[
\mathcal{J}_i(\Delta)
\]
be the set of almost complex structures on $Sym^k(\sphere^2(1+s_i))$ which coincide with $J^{s_i}_{X_{s_i}}$ on a neighbourhood of the fat diagonal $\Delta\subset X_{s_i}$, and elsewhere they are tamed by $\omega_{X_{s_i}}$. Here we use $\omega_{X_{s_i}}$ and $J^{s_i}_{X_{s_i}}$ for the natural (singular) symplectic form and almost complex structure on $X_{s_i}$ induced by the projection to the quotient (\ref{eq:quotProj}).

Let now $u:\R\times \sphere^1\rightarrow \Sym^k(\sphere^2(1+s_1))$ be a smooth function satisfying the conditions
\begin{equation}\label{eq:holCond}
\begin{cases}
u(s, 0)\in\Sym^k(\phi_{s_1})(Sym^k(\underline{L}))\\
u(s, 1)\in\Sym^k(j_{s_1})(Sym^k(\underline{L}))\\
\lim_{s\to -\infty}u(s, t)=j_{s_1}y_0\\
\lim_{s\to +\infty}u(s, t)=j_{s_1}y_1\\
\partial_s u(s, t)+J^{s_1}_t\partial_t u(s, t)=0
\end{cases}.\end{equation}
where $J^{s_1}\in\mathcal{J}_1(\Delta)$ is an almost complex structure on $Sym^k(\sphere^2(1+s_1))$. Endow $Sym^k(\sphere^2(1+s_2))$ with the almost complex structure $J^{s_2}$ defined as follows. On a small neighbourhood of $\Delta$ we set $J^{s_2}:=J^{s_2}_{X_{s_2}}$, and elsewhere we put $J^{s_2}:={\psi_{s_1}^{s_2}}_*J^{s_1}$ (the push-forward by $\psi_{s_1}^{s_2}$ of $J^{s_1}$), so that by definition the differential of $\psi_{s_1}^{s_2}$ is complex linear.

We want then to show that $J^{s_2}$ thus defined is an element of $\mathcal{J}_2(\Delta)$: we only need to show that $J^{s_2}$ is tamed by $\omega_{X_{s_2}}$ if $J^{s_1}$ is tamed by $\omega_{X_{s_1}}$. Tameness being an open condition, and since the natural complex structure is tamed by the natural symplectic structure, let us assume for the moment that $J^{s_1}$ is close enough to $J^{s_1}_{X_{s_1}}$. The pushforward ${\psi_{s_1}^{s_2}}_*$ defines a homeomorphism between the spaces of almost complex structures on $X_{s_1}$  and $X_{s_2}$, and sends by definition $J^{s_1}_{X_{s_1}}$ to $J^{s_2}_{X_{s_2}}$. This implies that if $J^{s_1}$ is close enough to $J^{s_1}_{X_{s_1}}$ then $J^{s_2}:=(\psi_{s_1}^{s_2})_*J^{s_1}$ is close to $J^{s_2}_{X_{s_2}}$ and is tame: $J^{s_2}\in\mathcal{J}_2(\Delta)$.

A bijective correspondence between holomorphic curves $u_1: \D\rightarrow X_{s_1}$ and $u_2: \D\rightarrow X_{s_2}$ is given by the post-composition with $\psi_{s_1}^{s_2}$ (or its inverse). We are left to show that the differentials are defined at the same time, i.e. that if $J^{s_1}$ achieves the transversality required in the definition of the
Floer differential, then so does $J^{s_2}$. As mentioned above we have a well defined homeomorphism
\[
(\psi_{s_1}^{s_2})_*:\mathcal{U}^1\xrightarrow {\sim}\mathcal{U}^2
\]
where $\mathcal{U}^i\subset\mathcal{J}_i(\Delta)$ are neighbourhoods of the natural almost complex structures; the inverse is given by the pullback via $\psi_{s_1}^{s_2}$. If $\mathcal{J}_{i, \tau}(\Delta)\subset \mathcal{J}_i(\Delta)$ is the Baire set of almost complex structures giving good definition for the Floer differential (proof in \cite[Section 5]{Cghmss21}), let us prove that $\mathcal{J}_{1, \tau}(\Delta)\cap (\psi_{s_1}^{s_2})^*(\mathcal{U}^2\cap\mathcal{J}_{2, \tau}(\Delta))\neq\emptyset$. If it is the case, we may choose $J^{s_1}$ in it, and push it forward to $J^{s_2}$, so both Floer complexes can be defined with these choices. 
Now, the intersection cannot be empty: since $\mathcal{J}_{2,\tau}(\Delta)$ is generic in $\mathcal{J}_2(\Delta)$, $\mathcal{U}^2\cap\mathcal{J}_{2,\tau}(\Delta)$ is a Baire set in $\mathcal{U}^2$, and $(\psi_{s_1}^{s_2})^*(\mathcal{U}^2\cap\mathcal{J}_{2, \tau}(\Delta))$ is Baire in $\mathcal{U}^1$. Intersecting the latter with $\mathcal{J}_{1, \tau}(\Delta)$ gives a Baire set in $\mathcal{U}^1$, which is in particular not empty.

As for the orientation of moduli spaces, one can arbitrarily define a spin structure on $Sym^k(\underline{L})$ already on the disc, that which gives spin structures on $Sym^k(\underline{L}_{s_2})$ and $Sym^k(\underline{L}_{s_1})$ by pushforward by $j_{s_i}$, and then these two correspond by pushforward by $\psi_{s_1}^{s_2}$ since $\psi_{s_1}^{s_2}$ preserves the orientation of the Lagrangian link.
\end{proof}

\begin{rem}
    The isomorphism between the Floer complexes in Lemma \ref{lemma:chainMap} is an isomorphism of persistence modules only up to shift. What we are going to do later essentially amounts to computing how much it fails to preserve the action filtration. 
\end{rem}
Let $Q\Psi_{s_1}^{s_2}:QH(Sym^k(\underline{L}_{s_1}); \Z)\rightarrow QH(Sym^k(\underline{L}_{s_2}); \Z)$ be the morphism on quantum homology induced by $\psi_{s_1}^{s_2}$.
\begin{lem}\label{lemma:PSS}
Let $PSS(s_i): QH(Sym^k(\underline{L}_{s_i}); \Z)\rightarrow HF(\phi_{s_i}, \underline{L}_{s_i}; \Z)$ be the PSS isomorphisms for $i=1, 2$. Then the following diagram commutes:
\[
\begin{tikzcd}
QH(Sym^k(\underline{L}_{s_1}); \Z)\arrow[d, "PSS(s_1)"] \arrow[r, "Q\Psi_{s_1}^{s_2}"]& QH(Sym^k(\underline{L}_{s_2}); \Z)\arrow[d, "PSS(s_2)"]\\
HF(\phi_{s_1}, \underline{L}_{s_1};  \Z)\arrow[r, "\Psi_{s_1}^{s_2}"]&HF(\phi_{s_2}, \underline{L}_{s_2};  \Z)
\end{tikzcd}.
\]
\end{lem}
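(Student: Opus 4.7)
The plan is to establish commutativity by observing that $\psi_{s_1}^{s_2}$ intertwines every piece of geometric data entering the two PSS constructions, so that after pushforward one obtains a bijection between the PSS moduli spaces on the two sides. Since $\Psi_{s_1}^{s_2}$ and $Q\Psi_{s_1}^{s_2}$ are both defined by the same pushforward, naturality of PSS under this identification forces the diagram to commute.

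Concretely, I would first recall the Lagrangian PSS of \cite{Cghmss21, Zap15}: on the $s_i$ side one chooses a Morse function $f_{s_i}$ on $Sym^k(\underline{L}_{s_i})$ together with a PSS-regular almost complex structure $J^{s_i}\in\mathcal{J}_i(\Delta)$, and $PSS(s_i)(\sigma)$ is the signed count of configurations consisting of a negative gradient line of $f_{s_i}$ from a cycle $\sigma$ into a half-strip $[0,\infty)\times[0,1]\to X_{s_i}$ with boundary on $Sym^k(\underline{L}_{s_i})$, satisfying Floer's equation with a cutoff of $H_{s_i}$ turning on as $s\to\infty$, and converging at $+\infty$ to an intersection point in $Sym^k(\phi_{s_i})(Sym^k(\underline{L}_{s_i}))\cap Sym^k(\underline{L}_{s_i})$. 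I would then choose the auxiliary data coherently: pick PSS-regular $f_{s_1}, J^{s_1}$ on the $s_1$ side, and take $f_{s_2}:=f_{s_1}\circ(\psi_{s_1}^{s_2})^{-1}$ and $J^{s_2}:=(\psi_{s_1}^{s_2})_*J^{s_1}$ as in Lemma \ref{lemma:chainMap}. Because $\psi_{s_1}^{s_2}$ is a biholomorphism, sends $Sym^k(\underline{L}_{s_1})$ to $Sym^k(\underline{L}_{s_2})$ preserving orientation and the transported spin structure, and matches the Hamiltonians $H_{s_1}$ and $H_{s_2}$ under $j_{s_i}$, post-composition with $\psi_{s_1}^{s_2}$ maps $J^{s_1}$-holomorphic PSS configurations bijectively to $J^{s_2}$-holomorphic ones, commutes with the degeneration strata defining the boundary of the compactified moduli spaces, and preserves signs.

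From this identification of moduli spaces the conclusion is formal: for a cycle $\sigma$ on $Sym^k(\underline{L}_{s_1})$, the chain $\Psi_{s_1}^{s_2}(PSS(s_1)(\sigma))$ is, at the level of generators, the pushforward by $\psi_{s_1}^{s_2}$ of the PSS count on the $s_1$ side, which under the bijection equals the PSS count on the $s_2$ side for the cycle $\psi_{s_1}^{s_2}\sigma=Q\Psi_{s_1}^{s_2}\sigma$, i.e.\ $PSS(s_2)(Q\Psi_{s_1}^{s_2}\sigma)$. The main obstacle is exactly the one already present in Lemma \ref{lemma:chainMap}, namely the potential non-smoothness of $\psi_{s_1}^{s_2}$ along $\Delta$ and the need to choose $J^{s_1}$ so that both it and its pushforward are PSS-regular; this is handled verbatim as there, by restricting to a neighbourhood of the natural complex structures where $(\psi_{s_1}^{s_2})_*$ defines a homeomorphism $\mathcal{U}^1\xrightarrow{\sim}\mathcal{U}^2$ and intersecting the two Baire subsets of PSS-regular structures on $\mathcal{U}^1$ and $(\psi_{s_1}^{s_2})^*\mathcal{U}^2$ to obtain a non-empty set of admissible $J^{s_1}$.
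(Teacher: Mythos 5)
Your proposal is correct and follows essentially the same route as the paper: transport all auxiliary data (Morse--Smale pair, almost complex structure, spin structure/orientations) by $\psi_{s_1}^{s_2}$, obtain a bijection between the moduli spaces defining the two PSS maps, and handle regularity of the pushed-forward structures exactly as in Lemma \ref{lemma:chainMap}. The paper phrases the quantum side in the pearly model and additionally cites the modification of the singular Hamiltonian $Sym^k(H)$ near $\Delta$ from \cite{Cghmss21}, Section 6.2, but these are presentational rather than substantive differences.
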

\begin{proof}
The definition of the PSS isomorphism and its bijectivity in the case of symmetric products are addressed in \cite[Section 6.2]{Cghmss21}. Modifying the (singular) Hamiltonian $Sym^k(H)$ near the diagonal, one can find  an
isomorphic filtered chain complex for which the standard proofs work. In the following we assume that this operation has been done. In particular, as per \cite[Lemma 6.11]{Cghmss21}, pearly  models are used in the definitions of the maps $PSS(s_i)$: in the following we discuss how we can use $\psi_{s_1}^{s_2}$ to push forward the pearly model defining $PSS(s_1)$ to a pearly model defining $PSS(s_2)$.

As in the proof for $\Psi_{s_1}^{s_2}$, we need to check that $\psi_{s_1}^{s_2}$ induces a bijective correspondence of generators of the chain complexes and of moduli spaces defining the differential. This is immediately clear for $Q\Psi_{s_1}^{s_2}$: gradient lines on $Sym^k(\underline{L}_{s_1})$ are mapped to gradient lines if one fixes a Morse-Smale pair and then pushes it forward, and to holomorphic discs in the pearly model for $QC(Sym^k(\underline{L}_{s_1}))$ we make bijectively correspond holomorphic discs in the pearly model for $QC(Sym^k(\underline{L}_{s_2}))$. As for the orientation of the involved moduli spaces, we argue as at the end of Lemma \ref{lemma:chainMap}: the spin structures on the Lagrangian links are sent one to the other by $\psi_{s_1}^{s_2}$. This ends the proof that $Q\Psi_{s_1}^{s_2}$ is an isomorphism. For the commutativity of the diagram, one just needs to choose the PSS-data for $PSS(s_1)$ and push it forward by $\psi_{s_1}^{s_2}$ as done in Lemma \ref{lemma:chainMap}. In particular, one needs to make sure that achieving simultaneous transversality for all the involved definitions is possible, and this is done precisely as in Lemma \ref{lemma:chainMap}. The moduli spaces involved in the definition of $PSS(s_i)$, $i=1, 2$, are then zero-dimensional smooth compact manifold as desired, and they can be shown to be in bijection using a post-composition by $\psi_{s_1}^{s_2}$ as we did in the proof of Lemma \ref{lemma:chainMap}.
\end{proof}

The next lemma is crucial to determine the relation between the spectral invariants associated to embeddings into spheres of different areas.
\begin{lem}\label{lemma:sameLinComb}
    If $\hat{x}, \hat{y}$ are intersection points for $(Sym^k(H_{s_1}), Sym^k(\underline{L}_{s_1}))$ with cappings, then\[
    \mathcal{A}^{\eta_1}_{H_{s_1}}(\hat{x})-\mathcal{A}^{\eta_2}_{H_{s_2}}(\Psi_{s_1}^{s_2}\hat{x})=\mathcal{A}^{\eta_1}_{H_{s_1}}(\hat{y})-\mathcal{A}^{\eta_2}_{H_{s_2}}(\Psi_{s_1}^{s_2}\hat{y}).
    \]
\end{lem}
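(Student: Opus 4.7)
My plan is to show that $F(\hat{x}) := \mathcal{A}^{\eta_1}_{H_{s_1}}(\hat{x}) - \mathcal{A}^{\eta_2}_{H_{s_2}}(\Psi_{s_1}^{s_2}\hat{x})$ is independent of the capped intersection point $\hat{x}$, by separately treating the Hamiltonian, symplectic area, and diagonal contributions to the action. The Hamiltonian part cancels identically: since $\phi_{s_1}$ is compactly supported in $j_{s_1}(\D)$, every intersection point $x$ and its Hamiltonian orbit $x(t) = \phi^{1-t}_{H_{s_1}}(x)$ lie in $Sym^k(j_{s_1}(\D))$, and on this region the intertwining $Sym^k(H_{s_1}) = Sym^k(H_{s_2})\circ\psi_{s_1}^{s_2}$ holds by definition of $H_{s_i}$, so the Hamiltonian integrals appearing in $\mathcal{A}^{\eta_1}_{H_{s_1}}(\hat{x})$ and $\mathcal{A}^{\eta_2}_{H_{s_2}}(\Psi_{s_1}^{s_2}\hat{x})$ coincide.

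The next step is to show that $F$ does not depend on the choice of capping for a fixed $x$. Two cappings of $x$ differ by a class $w$ in the Hurewicz image inside $H_2(X_{s_1}, Sym^k(\underline{L}_{s_1}))$, and Lemma 4.19 of \cite{Cghmss21} yields $\omega_{X_{s_1}}(w) + \eta_1\, w\cdot\Delta_{s_1} = \frac{\lambda}{2}\mu(w)$, where $\mu$ is the relative Maslov index. Because $\psi_{s_1}^{s_2}$ is a biholomorphism carrying the Lagrangian link to itself, it preserves $\mu$, so applying the same lemma in $X_{s_2}$ to $\psi_{s_1}^{s_2}w$ yields the identical value $\frac{\lambda}{2}\mu(w)$; subtraction shows the recapping contribution to $F$ vanishes.

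Finally, to compare $F(\hat{x})$ and $F(\hat{y})$ for distinct intersection points, I would glue $\hat{x}$ to the reversed capping $\hat{y}^{-1}$ along the common reference orbit $\boldsymbol{\alpha}$, producing a rectangle $v:[0,1]^2 \to X_{s_1}$ with vertical edges constant at $x$ and $y$ and horizontal edges on $Sym^k(\underline{L}_{s_1})$ and $Sym^k(\phi_{s_1})(Sym^k(\underline{L}_{s_1}))$. Straightening the $\phi_{s_1}$-image side back to $Sym^k(\underline{L}_{s_1})$ via the Hamiltonian flow converts $v$ into a relative class $\tilde{v} \in \pi_2(X_{s_1}, Sym^k(\underline{L}_{s_1}))$ at the cost of a boundary Hamiltonian integral. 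By the equivariance $\psi_{s_1}^{s_2}\circ\phi^t_{H_{s_1}} = \phi^t_{H_{s_2}}\circ\psi_{s_1}^{s_2}$ on $Sym^k(j_{s_1}(\D))$, the analogous straightening of $\Psi_{s_1}^{s_2}v$ yields $\psi_{s_1}^{s_2}\tilde{v}$ with matching boundary Hamiltonian correction; the two corrections cancel in $F(\hat{x}) - F(\hat{y})$, and the Maslov identity of the previous step applied to $\tilde{v}$ and $\psi_{s_1}^{s_2}\tilde{v}$ closes the argument. The main obstacle is making the straightening and the cancellation of boundary Hamiltonian contributions precise on the nose (rather than modulo $\lambda\mathbb{Z}$), and tracking the orientations so that $\tilde{v}$ and $\psi_{s_1}^{s_2}\tilde{v}$ correspond coherently under the identification of relative $\pi_2$-groups from Lemma \ref{lemma:chainMap}.
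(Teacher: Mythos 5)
Your first two steps are sound and essentially match the paper: the Hamiltonian terms cancel because the orbits lie in $Sym^k(j_{s_i}(\D))$, where the Hamiltonians correspond under $\psi_{s_1}^{s_2}$, and independence of the choice of capping follows from Lemma 4.19 of \cite{Cghmss21} together with invariance of the Maslov class under the biholomorphism and the equality $\lambda_{s_1}=\lambda_{s_2}=A$ (which you use implicitly and should state). The gap is in your last step. After straightening the glued rectangle $v=\hat{x}\#\overline{\hat{y}}$ by the Hamiltonian flow, the two horizontal edges do land on $Sym^k(\underline{L}_{s_1})$, but the two vertical edges, which were constant at $x$ and $y$, become Hamiltonian flow arcs through $x$ and $y$, and these do not lie on the Lagrangian. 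Hence $\tilde{v}$ is not a class in $\pi_2(X_{s_1}, Sym^k(\underline{L}_{s_1}))$ (nor in the Hurewicz image $H_2^D$), and Lemma 4.19 cannot be applied to it. This is not a removable technicality: the identity $\omega_X(w)+\eta\, w\cdot\Delta=\tfrac{\lambda}{2}\mu(w)$ is genuinely false for strips joining distinct intersection points --- already for one circle a small half-moon strip between two adjacent points of $\phi(L)\cap L$ has arbitrary small area, not a multiple of $\lambda/2$ --- so no bookkeeping of boundary Hamiltonian corrections, however careful, will close the argument along this route.

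What is missing is exactly the mechanism the paper uses to compare distinct intersection points. Since $F(\hat{x}):=\mathcal{A}^{\eta_1}_{H_{s_1}}(\hat{x})-\mathcal{A}^{\eta_2}_{H_{s_2}}(\Psi_{s_1}^{s_2}\hat{x})$ is recapping-invariant (your second step), one may recap so that both cappings are contained in $Sym^k(\D)$; there $\psi_{s_1}^{s_2}$ is symplectic and preserves the diagonal, so by Equation \ref{eq:diffAction} the difference $F(\hat{x})-F(\hat{y})$ reduces to $(\eta_{s_2}-\eta_{s_1})\bigl([\hat{x}]\cdot\Delta-[\hat{y}]\cdot\Delta\bigr)$, and the whole content of the lemma becomes the equality of these two diagonal counts. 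The paper settles this by a further recapping with classes of $\pi_2(Sym^k(\D), Sym^k(\underline{L}))$ (whose generators do not meet $\Delta$) so that the two cappings become homotopic through a homotopy whose endpoint path stays in $Sym^k(\underline{L})$, which is disjoint from $\Delta$. Your proposal never treats the diagonal contributions separately; it tries to absorb them into a Maslov identity that is unavailable for the connecting strip, so the lemma is not established as written.
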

\begin{proof}
First observe that since the monotonicity constant $\lambda$ does not depend on the embedding we choose, we may recap the intersection points in such a way that the capping is contained in $Sym^k(\D)$ without changing either side of the equation. In this setting, by Equation \ref{eq:diffAction}, we are left to prove that
\[
(\eta_{s_2}-\eta_{s_1})[\hat{x}]\cdot\Delta=(\eta_{s_2}-\eta_{s_1})[\hat{y}]\cdot\Delta.
\]
If $\hat{x}$ could be homotoped to $\hat{y}$ without adding intersections with the diagonal, using a path in $Sym^k(\underline{L})$, this would be true. To be able to suppose this, we remark that we may recap either intersection point in such a way that both cappings are still contained in $Sym^k(\D)$ and are moreover homotopic, using the action of $\pi_2(Sym^k(\D), Sym^k(\underline{L}))$. This does not affect intersection products with the diagonal, which is what we are interested in, because we now recap using relative homology classes $u_i$ such that $u_i\cdot\Delta=0$.
\end{proof}
Let us shorten the notation, defining
\begin{equation*}
    c'_{\underline{L}_s}(H_s):=c_{\underline{L}_s}(H_s)+\frac{1}{1+s}\Cal(\phi_H).
\end{equation*}
The next lemma will prove the main result.
\begin{lem} If $\phi_H\in\Ham_{\underline{L}}(\D, \omega)$,
\begin{equation*}
  c'_{\underline{L}_0}(H_{0})- c'_{ \underline{L}_{(k+1)A-1}}(H_{(k+1)A-1})=\frac{1}{k}\eta_0 \mathrm{lk}(b(\phi_H, \underline{L})).  
\end{equation*}
\end{lem}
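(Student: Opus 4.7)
The plan is to reinterpret the left-hand side as the action shift induced by the chain isomorphism $\Psi_0^{kA-1}$ built in Lemma \ref{lemma:chainMap}, and to evaluate this shift by choosing a convenient capped intersection point. By Lemma \ref{lemma:pointSpectrum}, each spectrum $\mathrm{Spec}(H_{s_i}:\underline{L}_{s_i})$ for $s_1=0$ and $s_2 = kA-1$ consists of a single orbit of period $\lambda = A$, and by Lemma \ref{lemma:PSS} the units of Floer cohomology are identified across the two embeddings, so the spectral invariants on the two sides correspond to the same Floer generator. Since Lemma \ref{lemma:sameLinComb} shows the action shift is independent of which capped intersection is used to compute it, I can combine any choice of capping $\hat{y}$ contained in $Sym^k(\D)$ with Equation \ref{eq:diffAction} to obtain
\[
c_{\underline{L}_0}(\phi_0) - c_{\underline{L}_{kA-1}}(\phi_{kA-1}) = \frac{1}{k}(\eta_{kA-1}-\eta_0)\,[\hat{y}]\cdot\Delta,
\]
the factor $1/k$ coming from the normalisation $c_{\underline{L}} = \frac{1}{k} c_{Sym^k(\underline{L})}$.

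The next step is to express $[\hat{y}]\cdot\Delta$ in terms of $\mathrm{lk}(b(\phi, \underline{L}))$. I would choose $y = (p_1, \dots, p_k)$ with $p_i \in L_i$ the basepoints used to define $b(\phi, \underline{L})$, and build $\hat{y}$ by concatenating the Hamiltonian flow $t\mapsto\phi^{1-t}(y)$ with paths running along the circles $L_{\sigma(i)}$ that close it up to a loop in $Sym^k(\underline{L})$. By construction, the boundary $\partial\hat{y}$ represents $b(\phi, \underline{L})$ in $\pi_1(Sym^k(\D))$, so $[\hat{y}]\cdot\Delta$ descends to a homomorphism $\mathcal{B}_k \to \Z$. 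Being a homomorphism, it is determined by its value on a single generator $\sigma_i$, which can be computed in $Sym^2(\C)$ via the elementary symmetric polynomials $(e_1, e_2) = (z_1+z_2, z_1 z_2)$ that identify the fat diagonal with the smooth complex curve $\{e_1^2 = 4 e_2\}$.

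The principal obstacle is this last local intersection count. For an explicit model of a half-twist, the natural cap disc is tangent to $\Delta$ at the collision point rather than transverse, so the intersection multiplicity must be extracted from the order of vanishing of $e_1^2 - 4 e_2$ along the disc, and the calculation has to be carried out with care regarding the paper's convention under which $t\mapsto(0, \tfrac{1}{2}e^{2\pi i t})$ has linking $2$. Once the resulting proportionality constant between $[\hat{y}]\cdot\Delta$ and $\mathrm{lk}(b(\phi, \underline{L}))$ is in hand, the identity follows by substituting the monotonicity constants $\eta_s = \frac{(k+1)A - 1 - s}{2(k-1)}$, derived from the $\eta$-monotonicity equation $A = 2\eta_s(k-1) + (1+s-kA)$ applied simultaneously to the $k$ contractible disc components and to the outer component of $\sphere^2(1+s)\setminus \underline{L}_s$.
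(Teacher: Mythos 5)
Your first half is essentially the paper's own argument: Lemma \ref{lemma:pointSpectrum} plus Lemmata \ref{lemma:chainMap}, \ref{lemma:PSS}, \ref{lemma:sameLinComb} and Equation \ref{eq:diffAction} reduce the difference of spectral invariants to $\frac{1}{k}(\eta_{s_2}-\eta_{s_1})[\hat{y}]\cdot\Delta$ for a capping contained in $Sym^k(\D)$, and this part is fine. The genuine gap is in the second half, and it is exactly where the content of the lemma lies: you assert that $[\hat{y}]\cdot\Delta$ defines a homomorphism $\mathcal{B}_k\to\Z$, reduce to its value on one generator $\sigma_i$, and then explicitly stop before computing that value (``the principal obstacle''). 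Without that local multiplicity the proportionality constant between $[\hat{y}]\cdot\Delta$ and $\mathrm{lk}(b(\phi,\underline{L}))$ is undetermined, so the stated coefficient $\frac{1}{2k}\eta_0$ is not established; this is not a routine verification, since the intersection with the discriminant is tangential in the natural model (as you note) and factors of $2$ enter precisely here. Moreover, the homomorphism claim itself needs justification before it can be used: well-definedness of the count requires that changing the capping inside $Sym^k(\D)$ does not affect it (this holds because the classes $u_1,\dots,u_k$ of the disc components satisfy $u_i\cdot\Delta=0$), and additivity under concatenation must be checked with the Lagrangian boundary conditions; the paper handles the analogous points by an explicit concatenation of cappings (its reduction to pure braids, where both sides are homogeneous under iteration of $\phi$) and then computes the count by an unlinking homotopy through crossing changes, each of which shifts the linking number by $2$. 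Your single-generator route is a reasonable, arguably cleaner alternative, but as written it is a sketch whose decisive step is missing.

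A second, smaller problem is the final substitution. With $s_1=0$ and $s_2=kA-1$ your own formula gives
\[
c_{\underline{L}_0}(\phi_0)-c_{\underline{L}_{kA-1}}(\phi_{kA-1})=\frac{1}{k}\bigl(\eta_{kA-1}-\eta_0\bigr)[\hat{y}]\cdot\Delta ,
\]
and by your (correct) formula $\eta_s=\frac{(k+1)A-1-s}{2(k-1)}$ one has $\eta_{kA-1}=\frac{A}{2(k-1)}\neq 0$, so the right-hand side does not reduce to a multiple of $\eta_0$ alone. The intended second embedding is into the sphere where the monotonicity constant vanishes, i.e. total area $(k+1)A$ (so $s=(k+1)A-1$, as in the $k=2$ computation of Section \ref{section:B2}); note also that a sphere of area $1+(kA-1)=kA<1$ could not even contain the unit-area disc. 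This is arguably a slip inherited from the statement, but a complete proof has to notice it, since otherwise the claimed identity does not come out of your displayed equation.
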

\begin{proof}
By the definition of $\Ham_{\underline{L}}(\D, \omega)$, $\phi_H$ cannot be non-degenerate, since the intersections between the Lagrangian link and its deformation under $\phi_H$ are not transversal. For the moment we consider small perturbations of $\phi_H$ generated by $l_\varepsilon\# H$ (as at the beginning of this Section). Let $y\in Sym^k(\underline{L})\cap Sym^k\phi_{l_\varepsilon\# H}(Sym^k(\underline{L}))$, and $\hat{y}_i$ a capping in $X_{s_i}$, $i=1, 2$. As we saw in the proof of Lemma \ref{lemma:sameLinComb}, by Equation \ref{eq:diffAction}\begin{align*}
    &\mathcal{A}^{\eta_{s_2}}_{(l_\varepsilon\# H)_{s_2}}(\hat{y}_2)-\mathcal{A}^{\eta_{s_1}}_{(l_\varepsilon\# H)_{s_1}}(\hat{y}_1)=\\=\frac{k}{1+s_2}\Cal & (\phi_{l_\varepsilon\# H})-\frac{k}{1+s_1}\Cal(\phi_{l_\varepsilon\# H})+(\eta_{s_1}-\eta_{s_2})[\hat{y}]\cdot\Delta
\end{align*}
for all $s_1, s_2\in \left(0, (k+1)A-1\right)$ if all the cappings are included in $Sym^k(\D)$. By Lemma \ref{lemma:sameLinComb} we know that the action of capped intersection points is uniformly shifted when the area of the sphere changes, by Lemmata \ref{lemma:chainMap} and \ref{lemma:PSS} we know that a chain in $CF(\phi_{(l_\varepsilon\# H)s_1}, \underline{L}_{s_1})$ represents the fundamental class if and only if its image by ${\Psi_{s_1}^{s_2}}$ does: $c_{\underline{L}_{s_1}}((l_\varepsilon\# H)_{s_1})$ and $c_{\underline{L}_{s_2}}((l_\varepsilon\# H)_{s_2})$ are attained by a linear combination of capped intersection points which are identified by $\Psi_{s_1}^{s_1}$.
Therefore, since the right hand side of Equation \ref{eq:diffAction} is continuous as a function of the parameters $s_1, s_2$, we find
\[
c_{\underline{L}_{0}}((l_\varepsilon\# H)_0)-c_{\underline{L}_{(k+1)A-1}}((l_\varepsilon\# H)_{(k+1)A-1})=\frac{1}{k}\left(\mathcal{A}_{0}([\hat{y}])-\mathcal{A}_{(k+1)A-1}([\hat{y}])\right)
\]
and recapping the orbit $y$ does not change this difference. Without loss of generality we may assume that all cappings are contained in $Sym^k(\D)$ where $\psi_{s_1}^{s_2}$ is symplectic. The symplectic area contributions in the action vanish and we are left with
\begin{equation}\label{eq:diffSpecInvInter}
    c'_{\underline{L}_{0}}((l_\varepsilon\# H)_0)-c'_{\underline{L}_{(k+1)A-1}}((l_\varepsilon\# H)_{(k+1)A-1})=\frac{\eta_{0}}{k}[\hat{y}]\cdot \Delta
\end{equation}

where $[\hat{y}]$ is a homotopy between a constant braid and the braid $b(\phi, \underline{L})$ which is fully contained in $Sym^k(\D)$. We now claim that using the (Hofer Lipschitz) and the (Spectrality) axioms, together with the Hofer Lipschitz property of the Calabi homomorphism, we can show that this equality is still true in the degenerate case, i.e. if $\phi\in\Ham_{\underline{L}}(\D, \omega)$. It suffices to say that the right-hand side is constant under $\scc^2$-small perturbations of the Hamiltonian, which means that we have to show that, taking a sequence of cappings for $l_\varepsilon\# H$ as $\varepsilon\to 0$, the intersection number with the diagonal does not change for $\varepsilon$ small enough. As done at the beginning of Section \ref{section:proofMain}, we assume that the reference path for $l_\varepsilon\# H$ is constructed on a point $\textbf{x}$ which is independent from $\varepsilon$. Now, to compute the difference in (\ref{eq:diffSpecInvInter}) we may use an intersection point in $Sym^k(\underline{L})\cap Sym^k(\phi_{l_\varepsilon\# H})Sym^k(\underline{L})$ which does not depend on $\varepsilon$. The set $Sym^k(\underline{L})\cap Sym^k(\phi_{l_\varepsilon\# H})Sym^k(\underline{L})$ does in fact not depend on $\varepsilon$ (as long as it is small and positive), and by Lemma \ref{lemma:sameLinComb} the action is uniformly shifted by the chain-complex isomorphism\begin{equation*}
    CF((\phi_{l_\varepsilon\# H})_{s_1}, \underline{L}_{s_1})\rightarrow CF((\phi_{l_\varepsilon\# H})_{s_2}, \underline{L}_{s_2})
\end{equation*}for all $\varepsilon$ and $s$. Therefore, to compute the left-hand side of (\ref{eq:diffSpecInvInter}) we may choose an intersection point in $Sym^k(\underline{L})\cap Sym^k(\phi_{l_\varepsilon\# H})Sym^k(\underline{L})$ which does not depend on $\varepsilon$ as a point in $Sym^k(\D)$, together with cappings in $Sym^k(\D)$ which do depend on $\varepsilon$, since so does the reference path. However, since we have $\mathscr{C}^\infty$-convergence of the reference paths for the Floer complexes of $(\phi_{l_\varepsilon\# H})_s$ as $\varepsilon\to 0$ to a reference path for $(\phi_H)_s$, to compute the left-hand side of (\ref{eq:diffSpecInvInter}) we are allowed to take cappings which $\mathscr{C}^\infty$-converge as $\varepsilon$ tends to 0. This proves that the quantity $[\hat y]\cdot \Delta$ is constant for small $\varepsilon$, which is what we wanted to show, and the equality in (\ref{eq:diffSpecInvInter}) holds for diffeomorphisms in $\Ham_{\underline{L}}(\D, \omega)$.

To conclude now we need to prove that \begin{equation}\label{eq:int=link}
   [\hat{y}]\cdot \Delta=\mathrm{lk}(b(\phi_H, \underline{L})) 
\end{equation}
where $y$ is a point in $Sym^k(\underline{L})$ and $\hat y$ is a capping for $\phi_H$ based at $y$ whose image is entirely contained in $Sym^k(\D)$.
Without loss of generality we may assume that the braid $b(\phi_H, \underline{L})$ is pure, as follows. First off, since $\phi_H$ belongs in $\Ham_{\underline{L}}(\D, \omega)$, so does $\phi_H^n$ for all $n\in \N$. Note that both sides of Equation \ref{eq:int=link} are homogeneous under iteration of $\phi_H$. Indeed, for the right hand side we in fact already know that the linking number is a group homomorphism, the discussion for the left hand side being slightly more complicated we are going to prove it by induction. It suffices to check the statement for $\phi_H\circ\phi_H$ (the case $\phi^n_H\circ \phi_H$ being analogous), i.e. given a capped intersection point $\hat{y}$ for $\phi_H$ whose image lies inside $Sym^k(\D)$, we are going to construct a capped intersection point for $\phi^2_H$ whose image is still contained, up to homotopy, in $Sym^k(\D)$, and which is homotopic to a concatenation of cappings. Recall that $\hat{y}:[0, 1]\times [0, 1]\rightarrow Sym^k(\D)$ satisfies $\hat{y}(0, t)=y$, $\hat{y}(1, t)=\boldsymbol\alpha_1(t)$ for some reference Hamiltonian path $\boldsymbol\alpha_1$. Let $\boldsymbol{\alpha}_2$ be a reference path for $\phi\circ\phi$. We remark that modifying $\boldsymbol{\alpha}_1$ in such a way that $\boldsymbol{\alpha}_1(0)=\boldsymbol{\alpha}_1(1)$ using a path contained in $Sym^k(\underline{L})$ we may concatenate $\boldsymbol\alpha_1$ with itself. Adding to this concatenation another path in $Sym^k(\underline{L})$ and denoting the resulting path $\tilde{\boldsymbol{\alpha}}$,
\[
\tilde{\boldsymbol{\alpha}}(i)=\boldsymbol{\alpha}_2(i)\,\, \mathrm{for}\, i=0, 1.
\]
The two paths are moreover homotopic with fixed endpoints in $\mathrm{Conf}^k(\D)$. We are going to construct a homotopy in $Sym^k(\D)$ between $y$ and $\tilde{\boldsymbol{\alpha}}$, $\tilde{y}$, in such a way that it is clear that $[\tilde{y}]\cdot\Delta=2[\hat{y}]\cdot\Delta$. We define
\[
\tilde{y}=\hat{y}\#_t h_1\#_t \hat{y}\#_th_2
\]
where if $u, v:[0, 1]\times[0, 1]\rightarrow Sym^k(\D)$
\[
u\#_tv(s, t)=\begin{cases}
    u(s, 2t)\,\, \mathrm{for}\,\, t\in\left[0, \frac{1}{2}\right]\\
    v(s, 2t-1)\,\, \mathrm{for}\,\, t\in\left[\frac{1}{2}, 1\right]
\end{cases}
\]
is the concatenation along the $t$ variable, and $h_1$, $h_2$ are homotopies to the paths we had to interpose to respectively concatenate $\hat{y}$ with itself and to join the endpoints of $\boldsymbol{\alpha}_1$ with those of $\boldsymbol{\alpha}_2$.
By construction, in $\tilde{y}$ we may have intersections with the diagonal only in correspondence of the components $\hat{y}$, and each occurs twice with the same sign. This proof generalises the same way to arbitrary compositions of diffeomorphisms.

This proves that, if we compose an arbitrary number of copies of $\phi_H$ and divide both sides of Equation \ref{eq:int=link} by the same number, we may suppose without loss of generality that for all $i$, $\phi_H(L_i)=L_i$. Remember that restricting to homotopies contained in $\D$ the number $[\hat{y}]\cdot\Delta$ does not depend on the homotopy we use to compute it. We apply now the study of intersections in the symmetric product as developed in \cite[Section 4.2]{morTri24}: there the authors describe a class of intersections in the symmetric product in terms of braid type which are guaranteed to be transverse, called ``elementary intersections''. We refer to that article for details.

Consider a homotopy between $y$ and a trivial braid that starts by unlinking the $k$-th strand (we are here reversing the $s$-variable of the capping for clarity in the explanation, but we need to keep track of this in the sign of the intersections). Every time the $k$-th strand is involved in a crossing and passes under the other strand, introduce a pair of negative elementary intersections with the diagonal that makes the $k$-th  strand pass over the other. Every time we apply this operation, the linking number decreases by $2$ (see Figure \ref{figure:elIntersection}). After these modifications, the braid is reduced to one in $\mathcal{B}_{k-1}$ together with a constant strand, and the difference in linking number with the original braid coincides with the algebraic number of elementary intersections we introduced.

 \begin{figure}
	\centering
	%% Creator: Inkscape 1.3.2 (1:1.3.2+202311252150+091e20ef0f), www.inkscape.org
%% PDF/EPS/PS + LaTeX output extension by Johan Engelen, 2010
%% Accompanies image file '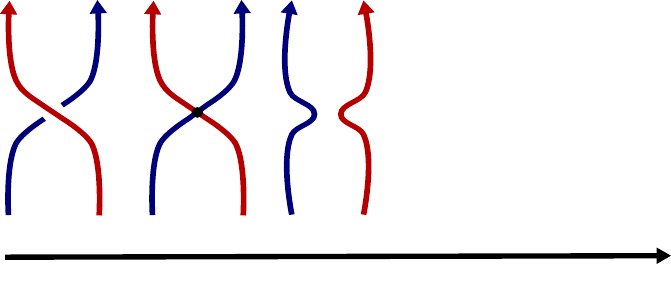' (pdf, eps, ps)
%%
%% To include the image in your LaTeX document, write
%%   \input{<filename>.pdf_tex}
%%  instead of
%%   \includegraphics{<filename>.pdf}
%% To scale the image, write
%%   \def\svgwidth{<desired width>}
%%   \input{<filename>.pdf_tex}
%%  instead of
%%   \includegraphics[width=<desired width>]{<filename>.pdf}
%%
%% Images with a different path to the parent latex file can
%% be accessed with the `import' package (which may need to be
%% installed) using
%%   \usepackage{import}
%% in the preamble, and then including the image with
%%   \import{<path to file>}{<filename>.pdf_tex}
%% Alternatively, one can specify
%%   \graphicspath{{<path to file>/}}
%% 
%% For more information, please see info/svg-inkscape on CTAN:
%%   http://tug.ctan.org/tex-archive/info/svg-inkscape
%%
\begingroup%
  \makeatletter%
  \providecommand\color[2][]{%
    \errmessage{(Inkscape) Color is used for the text in Inkscape, but the package 'color.sty' is not loaded}%
    \renewcommand\color[2][]{}%
  }%
  \providecommand\transparent[1]{%
    \errmessage{(Inkscape) Transparency is used (non-zero) for the text in Inkscape, but the package 'transparent.sty' is not loaded}%
    \renewcommand\transparent[1]{}%
  }%
  \providecommand\rotatebox[2]{#2}%
  \newcommand*\fsize{\dimexpr\f@size pt\relax}%
  \newcommand*\lineheight[1]{\fontsize{\fsize}{#1\fsize}\selectfont}%
  \ifx\svgwidth\undefined%
    \setlength{\unitlength}{322.0388842bp}%
    \ifx\svgscale\undefined%
      \relax%
    \else%
      \setlength{\unitlength}{\unitlength * \real{\svgscale}}%
    \fi%
  \else%
    \setlength{\unitlength}{\svgwidth}%
  \fi%
  \global\let\svgwidth\undefined%
  \global\let\svgscale\undefined%
  \makeatother%
  \begin{picture}(1,0.42809446)%
    \lineheight{1}%
    \setlength\tabcolsep{0pt}%
    \put(0,0){\includegraphics[width=\unitlength,page=1]{elIntersection.pdf}}%
    \put(0.92417473,0.00507598){\color[rgb]{0,0,0}\makebox(0,0)[lt]{\lineheight{1.25}\smash{\begin{tabular}[t]{l}$s$\end{tabular}}}}%
    \put(0,0){\includegraphics[width=\unitlength,page=2]{elIntersection.pdf}}%
  \end{picture}%
\endgroup%

	\caption{We represent in red the $k$-strand of the braid. The time $s$ is the time of the capping: at $s=0$ we have the intersection point (constant path), while at $s=1$ we have the reference path.  Here we represent $s$ only for a small segment in $(0, 1)$, and we focus on two strands only. The represented elementary intersection is positive. The sign difference from the text is due to the fact that we prefer to describe the unlinking process, for which we read this figure from right to left, effectively reversing the $s$-time and the signs of intersections.}
	\label{figure:elIntersection}
    \end{figure}

Proceed now inductively on the other strands. Since capped intersection points are in fact homotopies between the trivial braid and a braid made of Hamiltonian paths from $Sym^k(\phi_H)(Sym^k(\underline{L}))$ to $Sym^k(\underline{L})$, the number we obtain this way is $\mathrm{lk}(b(\phi_H, \underline{L}))$.
\end{proof}
This lemma promptly yields the main result: for any $\phi_H\in \Ham_{\underline{L}}(\D, \omega)$, for any $n\in \N$, $\phi_H^n\in \Ham_{\underline{L}}(\D,\omega)$ with $b(\phi_H^n, \underline{L})=b(\phi_H, \underline{L})^{\# n}$. Then
\[
\frac{1}{n}\left[c'_{ \underline{L}_{0}}(H_0^{\# n})-c'_{\underline{L}_{(k+1)A-1}}(H_{(k+1)A-1}^{\# n})\right]=\frac{1}{n}\frac{1}{k}\eta_0 n\cdot \mathrm{lk}(b(\phi_H, \underline{L}))=\frac{1}{k}\eta_0 \mathrm{lk}(b(\phi_H, \underline{L})).
\]
To find the theorem, one needs to note that\[
\eta_0=\frac{(k+1)A-1}{2(k-1)}
\]
as it follows from the equation
\[
A=\lambda=2\eta_0(k-1)+1-(k+1)A
\]
and to take the limit for $n\to\infty$.
\begin{rem}
From the proof we see that the main result is still true for the spectral invariants before homogenisation.
\end{rem}

\section{The case of $\mathcal{B}_2$}\label{section:B2}
In this section we are going to present a different proof of Theorem \ref{thm:qmorph} for $k=2$. The methods here generalise verbatim to higher numbers of strands, but to diffeomorphisms of certain braid types only.

M. Khanevsky in \cite{kha11} proved that given a non-displaceable disc contained in an annulus, the Hofer norm of a diffeomorphism fixing such a disc (not necessarily pointwise) is in a linear relation with the rotation number of the disc. As we see in this section, Khanevsky's proof may be generalised in an elementary way to our setting. We are going to consider two disjoint discs in $\D$ whose boundaries constitute a premonotone Lagrangian configuration $\underline{L}$, and give lower bounds on the Hofer norm of an element in $\Ham_{\underline{L}}(\D, \omega)$ using the linking number of the induced braid.

Let $\D\subset\C$ be the two dimensional unit open disc, with area form $\omega$ normalised so that $\int_\D\omega=1$. Let $D_1$, $D_2$ be two open disjoint discs in $\D$ of same area such that neither is displaceable in the complement of the other (then $A=Area(D_i)\in \left(\frac{1}{3}, \frac{1}{2}\right)$). Let $L_i:=\partial D_i$, and $\underline{L}=L_1\times L_2$, and assume moreover that $\phi(D_i)=D_i$. Remember that for a compactly supported Hamiltonian diffeomorphism $\phi\in \Ham_{\underline{L}}(\D, \omega)$ we can define its braid type, let us call it $b(\phi, \underline{L})\in\mathcal{B}_2$. Since $\mathcal{B}_2$ is isomorphic to $\Z$ via the linking number, we identify $b(\phi, \underline{L})$ with $\frac{1}{2}\cdot \mathrm{lk}(b(\phi, \underline{L}))$ (the factor of 2 is needed since in our definition of linking number agrees with the group-theoretic one, and the elementary loop $t\mapsto \exp(2\pi i t)$ has linking 2 with the origin of the complex plane). Since $\phi(D_i)=D_i$, $\frac{1}{2}\cdot \mathrm{lk}(b(\phi, \underline{L}))\in \Z$.

In a similar way as one can find in \cite{kha11}, we define the set
\[
S_n=\Set{\phi\in \Ham_{\underline{L}}(\D) | \phi(D_i)=D_i,\, \frac{1}{2}\mathrm{lk}(b(\phi, \underline{L}))=n}.
\]
If $\hat{\Phi}$ is a Hamiltonian diffeomorphism such that $\hat{\Phi}\in S_1$, we have a decomposition $S_n=\hat{\Phi}^nS_0$. We want to prove that if $\phi\in S_n$ then its Hofer norm satisfies
\[
\norm{\phi}\geq O(n)
\]
and to do this, we show that there is a quasimorphism which is Hofer-Lipschitz, strictly positive on $\hat{\Phi}$ and zero on $S_0$.

Consider polar coordinates on the disc, so that the symplectic form reads $\omega=\frac{1}{\pi}r\,dr\wedge d\theta$. Up to conjugation by a symplectic diffeomorphism (which does not change the computations of Hofer norms) we can suppose that the picture is symmetric, the two discs $D_i$ being found in the halfplanes of positive and negative $x$. Let $\hat{\Phi}$ be a compactly supported smooth approximation of a rotation of $2\pi$, a generating Hamiltonian being for instance \[
H(r, \theta)=\rho(r^2)r^2
\]
where $\rho$ is a plateau function which is 1 outside a small neighbourhood of 1 ($\rho$ will in general depend on the area of $D_i$, and we choose it in a way that $(r, \theta)\mapsto\rho(r^2)$ is equal to 1 in a neighbourhood of $D_1\cup D_2$). Let $j_s$ be a symplectic embedding from $\D$ into a sphere of area $1+s$, which we denote with $\sphere^2(1+s)$. It induces a Hofer 1-Lipschitz injection $j_{s\,*}:\Ham_c(\D)\rightarrow \Ham(\sphere(1+s))$ given by extension by 0 of the Hamiltonians. Observe that the configuration $\underline{L}_s:=j_s(\underline{L})$ is monotone in $\sphere^2$; let $\eta_s$ be its monotonicity constant that appears as an $\eta$ in Equation \ref{eq:monotonicity}.

The quasimorphism on $\Ham_c(\D)$ of our choice is \[
Q_2:=\mu^1_{2, \eta_0}\circ j_{0\, *}+\Cal-\left(\mu^{3A}_{2, \eta_{3A-1}}\circ j_{(3A-1)\, *}+\frac{1}{3A}\Cal\right).
\]

Its Hofer Lipschitz constant is $3+\frac{1}{3A}$ and we may see as follows that they are furthermore $\scc^0$-continuous by the criterion by Entov-Polterovich-Py (Theorem 3 in \cite{EPP08}, see also \cite{Cghmss21}). If we show that $Q_2$ vanishes on any Hamiltonian diffeomorphism supported on a disc of area less than $A$, an application of the criterion gives us $\scc^0$-continuity of $Q_2$. Using the first property in Theorem \ref{prop:mu_k}, $\mu^a_{k, \eta}$ only depends on $a, k$ and $\eta$. Given $\phi$ supported on a disc of area less than $A$, in order to compute $\mu^1_{2, \eta_0}\circ j_{0\, *}(\phi)$  and $\mu^{3A}_{2, \eta_{3A-1}}\circ j_{(3A-1)\, *}(\phi)$ we choose then circles contained in the complement of $\mathrm{Supp}(\phi)$ in the spheres. An application of the (Support Control) property of $\mu^a_{k, \eta}$ shows that $Q_2(\phi)=0$ and therefore the $\scc^0$-continuity of $Q_2$.

Write $Q_2=\mu'_0-\mu'_{3A-1}$, where
\[
\mu'_s=\mu^{1+s}_{2, \eta_{s}}\circ j_{s\, *}+\frac{1}{1+s}\Cal.
\]

Let us compute $Q_2(\hat\Phi)$: by the first property in Theorem \ref{prop:mu_k} for each of the $\mu'_s$ we can use a configuration of two circles, both contained in $j_s(\D)$ and whose centres coincide with $j_s(0)$. The area requirement translates to the condition that the radii associated to the two circles are $r=\sqrt{A}$ and $r=\sqrt{1+s-A}$. Then by (Lagrangian Control), we find
\[
\mu'_s(\hat{\Phi})=\frac{s+1}{2}
\]
so that
\[
Q_2(\hat{\Phi})=\frac{1}{2}-\frac{1+3A-1}{2}=\frac{1-3A}{2}.
\]

Following the steps of \cite{kha11}, and assuming that $Q_2$ does vanish on $S_0$ (we are going to later show that this is indeed the case), let us prove that if $\phi=\hat{\Phi}^n\psi$ for a diffeomorphism $\psi\in S_0$, then $Q_2(\phi)=Q_2(\hat{\Phi}^n)=nQ(\hat{\Phi})$. Given such a decomposition of $\phi$, one has $Q_2(\phi)=Q_2(\hat{\Phi}^n)+D(\phi)$, where $D(\phi)\in\R$ is bounded in modulus by the defect of the quasimorphism $Q_2$ (in particular, it is bounded uniformly on $\phi$). Since $Q_2$ is homogeneous, for all positive integers $k$
\[
Q_2(\phi)=\frac{Q_2(\phi^k)}{k}=\frac{Q_2(\hat{\Phi}^{nk})+D(\phi^k)}{k}=Q_2(\hat{\Phi}^n)+\frac{D(\phi^k)}{k}
\]
and taking the limit shows that $Q_2(\phi)=nQ_2(\hat{\Phi})$ (recall that $D(\phi^k)$ is uniformly bounded for every $k$). As $Q_2$ is Hofer $\left(3+\frac{1}{3A}\right)$-Lipschitz, we conclude that
\[
\norm{\phi}\geq \abs{\frac{n Q_2(\hat{\Phi})}{2}}\geq \frac{3A}{9A+1}\frac{3A-1}{2}n.
\]

\begin{rem}
We immediately see that our lower bound is an increasing function of $A$, which is zero in the limit case in which $A=\frac{1}{3}$: when the discs are displaceable, we cannot say anything this way about the Hofer norm of the diffeomorphisms. This is to be expected, as for $k=2$, by \cite{leR10} (see Remark following his Question 5) if the two discs are displaceable there exists a family of Hamiltonian diffeomorphisms, whose Hamiltonians are supported away from one of the two discs, with bounded Hofer norm but whose braid type may be arbitrarily linked.
\end{rem}

It is left to check that $Q_2$ vanishes on $S_0$. Using the (Hofer Lipschitz) and ($\scc^0-$continuity) properties of $Q_2$, it suffices to prove that $Q_2(S_0')=0$, where $S_0'$ is defined as the subgroup of Hamiltonian diffeomorphisms that induce the identity on a neighbourhood of $\partial\D\cup\partial D_1\cup\partial D_2$ (\cite{kha11}, Lemma 2). The argument is essentially the same as in \cite{kha11}, adapted to our quasimorphism $Q_2$. Firstly, to compute $Q_2$, by the first property in Theorem \ref{prop:mu_k}, this time we choose the obvious configuration $L_i=\partial D_i$. Now, if $\phi\in S_0'$, it can be decomposed into $\phi=\phi_{D_1}\circ \phi_{D_2}\circ \phi_P$, where $\phi_{D_i}$ is supported on $D_i$ and $\phi_P$ on $P=\D\setminus (D_1\cup D_2)$.

Since the boundaries of the $D_i$ are connected, $\phi_{D_i}\vert_{D_i}\in\Ham_c(D_i)\subseteq \Ham_c(\sphere^2\setminus (L_1\cup L_2))$, and by (Lagrangian Control) $Q_2(\phi_{D_i})=0$, thus it is left to show that $Q_2(\phi_P)=0$.

To prove this fact, as mentioned in \cite{kha11} (see also \cite{FarMar11}), it is possible to show that $\pi_0(\Symp_c(P))=\Z^3$, and that it is generated by three Dehn twists around the three boundary components. Below we show that if one applies an arbitrarily small Hofer deformation of $\phi$, taking place in $\Ham_c(\D)$, one can force $\phi_P\vert_P$ to be in the connected component of the identity of $\Symp_c(P)$. A Dehn twist around an embedded circle in $\D$ can be represented by a Hamiltonian whose Hofer norm is dependent on the diameter of the normal neighbourhood of the circle that we choose. In particular, we may represent the Dehn twists components in $[\phi_P\vert_P]\in\pi_0(\Symp_c(P))$ corresponding to curves near the boundaries of $D_i$ using $\psi\in\Ham_c(\D)$, with $\norm{\psi}<\varepsilon$; as for the component corresponding to a curve near $\partial \D$, it cannot appear in the decomposition since $\phi_P\vert_P$ is the restriction of an element in $S_0'$ (the presence of such factor forces the linking number to be $k$, where $k$ is the number of these Dehn twists counted with signs). As a result, $\psi^{-1}\circ\phi_P\vert_P$ is in the connected component of the identity in $\Symp_c(P)$, and it is enough to prove that $Q_2(\psi^{-1}\circ\phi_P\vert_P)=0$ by the Hofer Lipschitz property of $Q_2$.

$\psi^{-1}\circ\phi_P\vert_P$ is not necessarily in $\Ham_c(P)$, which would be enough to conclude; it may be however deformed to an element in $\Ham_c(P)$ without changing the value of $Q_2$. Let us consider the Flux homomorphism\[
\widetilde{\Symp}_c(P)\rightarrow H^1_c(P; \R)
\]
(definition and properties may be found in \cite{McDuffSalamon94})
and let us compute it on two Hamiltonian isotopies defined by $K_i$ where:
\begin{itemize}
    \item $K_i$ is supported on a small neighbourhood of $D_i$, equal to $1$ in a small neighbourhood of $L_i$ (here the $L_i$ constitute the premonotone Lagrangian configuration we had at the beginning of the section, it is not the pair of concentric circles on $\sphere^2$ we are using to compute the quasimorphism);
    \item $\supp(K_1)\cap \supp(K_2)=\emptyset$;
    \item $\supp(K_i)\cap \supp(\psi^{-1}\circ\phi_P\vert_P)=\emptyset$ for $i=1, 2$;
\end{itemize}

Since the two images $\Flux(\phi_{K_i}^t)$ are linearly independent (they are zero on one generator of $H_1(P, \partial P; \R)$ each) and $\dim(H^1_c(P; \R))=2$ there exist two real times $\xi_i\in\R$ such that
\[
\Flux(\phi^{\xi_1}_{K_1}\vert_P\circ \phi^{\xi_2}_{K_2}\vert_P\circ (\psi^{-1}\circ\phi_P\vert_P))=0
\]
and $\phi^{\xi_1}_{K_1}\circ \phi^{\xi_2}_{K_2}\circ (\psi^{-1}\circ\phi_P)$ may be represented by a Hamiltonian supported in $P$. Moreover,
\[
Q_2(\phi^{\xi_1}_{K_1}\circ \phi^{\xi_2}_{K_2}\circ (\psi^{-1}\circ\phi_P))=Q_2(\psi^{-1}\circ\phi_P).
\]

Indeed, since all the supports are disjoint, the three diffeomorphisms commute, therefore $Q_2$ is additive on them and \[
Q_2(\phi^{\xi_i}_{K_i})=\frac{1}{4}\xi_i-\frac{1}{4}\xi_i=0.
\]

Now we finish by remarking that 
\[
0=Q_2(\phi^{\xi_1}_{K_1}\circ \phi^{\xi_2}_{K_2}\circ (\psi^{-1}\circ\phi_P))=Q_2(\psi^{-1}\circ\phi_P)
\]
by (Support Control) property used on the premonotone Lagrangian configuration $L_1\times L_2$.

Summing up, given a diffeomorphism in $S_0$, it can be Hofer and $\scc^0$-deformed to an element in $S_0'$, changing by an arbitrarily small amount the value of $Q_2$. We then showed that in fact $Q_2(S_0')=0$, since any symplectic diffeomorphism in $S_0'$ is Hofer-close to one whose restriction to $P$ is symplectically isotopic to the identity of $P$. After this small perturbation, we applied a deformation without changing the value of $Q_2$, arriving to a Hamiltonian diffeomorphism with compact support in $P$, whose $Q_2$ value is necessarily 0 by an explicit computation.

\begin{rem}
It is possible to compare our estimates with the ones Khanevsky gives in \cite{kha11}. If $n$ is the rotation number of a non-displaceable disc in an annulus of area 1 under an isotopy between the identity and $\phi\in\Ham_c(\sphere^1\times(0, 1))$, he shows that
\[
\norm{\phi}\geq \frac{2A-1}{2}\abs{n}.
\]
To be able to compare the two estimates, one should remember that we always assumed our unit disc to have area 1, whereas Khanevsky only assumes the area of the annulus to be 1. In terms closer to our setup, this means that Khanevsky finds such estimate (up to a small change due to area normalisation) for compactly supported Hamiltonian diffeomorphisms of the disc $\D$, fixing a smaller disc contained in $\D$, and which may be represented by Hamiltonians supported away from another small disc (still contained in $\D$), disjoint from the first one. No further assumptions on the area of the latter small disc are made. 

We may see then that, while not requiring now the support of the Hamiltonian diffeomorphism to be contained in an annulus in $\D$, for our proof to work we do have to ask that the two small discs have the same area. Moreover, our estimate seems to be worse than the one given by Khanevsky.
\end{rem}
\printbibliography
\end{document}